\documentclass[preprint,12pt]{elsarticle}

\usepackage{amscd}
\usepackage{amsmath}
\usepackage{amsthm}
\usepackage{amsfonts}
\usepackage{graphicx}
\usepackage{amssymb}
\usepackage{color}
\usepackage{amsbsy}
\usepackage{graphicx}
\usepackage{amssymb,color,amsbsy}
\usepackage{mathtools}
\usepackage{leftindex}

\usepackage{float}
\usepackage{comment}

\usepackage{stmaryrd}

\usepackage[ruled]{algorithm2e}

\usepackage[matrix,arrow]{xy}

\DeclareMathAlphabet{\mathpzc}{OT1}{pzc}{m}{it}

\usepackage{pgfpages}
\usepackage{tikz}
\usetikzlibrary{shapes.geometric}
\usetikzlibrary{decorations.pathmorphing}
\usetikzlibrary{arrows}
\usetikzlibrary{backgrounds}
\usetikzlibrary{positioning}
\usetikzlibrary{fit}
\usepackage{caption}
\usepackage{amsthm}

\usepackage{amsmath}

\usepackage[pagebackref=true, bookmarksopen=true,colorlinks=true, linkcolor=red,citecolor=blue]{hyperref}

\usepackage[capitalise]{cleveref}  

\global\long\def\u{\cup}%

\global\long\def\s{\subset}%

\global\long\def\P{\prime}%

\global\long\def\sm{\sum}%

\global\long\def\ñ{\sim}%

\global\long\def\Le{\le}%

\global\long\def\a#1{\left|#1\right|}%

\usepackage{tikz-cd}

\usepackage{tkz-graph}

\usepackage{multicol}

\usepackage{subcaption}

\newtheorem{theorem}{Theorem}[section]

\newtheorem{claim}[theorem]{Claim}

\newtheorem{conjecture}[theorem]{Conjecture}

\newtheorem{lemma}[theorem]{Lemma}

\newtheorem{observation}[theorem]{Observation}

\usepackage{comment}
\begin{document}

\begin{abstract}

A graph of order $n$ is said to be $k$-\emph{factor-critical} $(0\le k<n)$
if the removal of any $k$ vertices results in a graph with a perfect
matching. A $k$-factor-critical graph $G$ is \emph{minimal} if $G-e$
is not $k$-factor-critical for any edge $e$ in $G$. In 1998, Favaron
and Shi posed the conjecture that every minimal $k$-factor-critical
graph is of minimum degree $k+1$.

A natural extension of this notion arises from $\{1,2\}$-factors. A spanning subgraph of $G$ is called a $\{1,2\}$-factor if each
of its components is a regular graph of degree one or two. A graph
is $k$-\emph{$\{1,2\}$-factor critical} if the removal of any $k$
vertices results in a graph with a $\{1,2\}$-factor.

A recent conjecture in the area states that every minimal $k$-$\{1,2\}$-factor critical graph $G$ satisfies $k+1\le \delta(G)\le k+2$. In this paper, we prove that the conjecture holds for $k$-planar graphs, that is, graphs in which the deletion of any set of $k$ vertices yields a planar graph. In particular, this resolves the conjecture for planar graphs.

\end{abstract}

\begin{keyword} 	Perfect matching; $k$-Factor-critical graph; Sachs subgraphs; Planar; Minimal k-factor-critical graph; Minimum degree.	\MSC 05C70, 05C50, 05C10 \end{keyword}
\begin{frontmatter} 
	\title{On the minimum degree of minimal $k$-$\{1,2\}$-factor critical $k$-planar graphs}


	\author[IMASL,DEPTO]{Kevin Pereyra} 	\ead{kdpereyra@unsl.edu.ar}


	\address[IMASL]{Instituto de Matem\'atica Aplicada San Luis, Universidad Nacional de San Luis and CONICET, San Luis, Argentina.}
	\address[DEPTO]{Departamento de Matem\'atica, Universidad Nacional de San Luis, San Luis, Argentina.} 	

	\date{Received: date / Accepted: date} 
	
\end{frontmatter} %

\section{Introduction}\label{sss0}

A spanning subgraph of $G$ is called a $\{1,2\}$-factor if each of
its components is a regular graph of degree one or two. $\{1,2\}$-factors
of a graph play a fundamental role in the study of determinants and permanents
of the adjacency matrix of the graph \cite{h2,s3}. These spanning subgraphs are also known in the literature
as \emph{Sachs subgraphs}. Graphs admitting
a $\{1,2\}$-factor can be characterized as follows.

\begin{theorem}
	[\cite{tutte19531}\label{aasd123}] A graph $G$ has a $\{1,2\}$-factor
	if and only if 
	\[
	i(G-S)\le\a S
	\]
	\noindent for all $S\s V(G)$. 
\end{theorem}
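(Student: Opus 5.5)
Here $i(H)$ is the number of isolated vertices of $H$. I would prove the two directions separately. Necessity is immediate: let $F$ be a $\{1,2\}$-factor of $G$ and let $S\s V(G)$. Every vertex $v$ that is isolated in $G-S$ has all of its $G$-neighbours in $S$. Since $v$ has degree at least one in $F$, it has an $F$-neighbour, and that neighbour lies in $S$. A vertex $s\in S$ has $F$-degree at most two, but the cleaner way to count is through a fixed orientation of $F$. Orient each cycle component cyclically; then every vertex has a well-defined successor $\sigma(v)$. For a $K_2$ component $\{a,b\}$, set $\sigma(a)=b$ and $\sigma(b)=a$. This $\sigma$ is a permutation with $v\sigma(v)\in E(G)$. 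Restricted to the isolated vertices of $G-S$, the map $\sigma$ is injective into $S$. Hence $i(G-S)\le\a S$.

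For sufficiency I would use the bipartite double cover. Form the bipartite graph $B$ with parts $V'=\{v':v\in V\}$ and $V''=\{v'':v\in V\}$, where $u'v''\in E(B)$ iff $uv\in E(G)$. A perfect matching of $B$ is exactly a permutation $\sigma$ of $V$ with $v\sigma(v)\in E(G)$ for all $v$. Decompose $\sigma$ into cycles. Cycles of length $2$ give $K_2$ components. Cycles of length at least $3$ give cycles in $G$, since consecutive elements are adjacent and the elements are distinct. Fixed points cannot occur because $G$ has no loops. So a perfect matching of $B$ yields a $\{1,2\}$-factor, and it suffices to verify Hall's condition in $B$.

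Take $X\s V$ with $X'$ the corresponding subset of $V'$. Then $N_B(X')=N_G(X)''$, where $N_G(X)$ is the set of vertices adjacent to some vertex of $X$; this set may meet $X$. We must show $\a{N_G(X)}\ge\a X$, and this is the main step. Put $S=N_G(X)$. Every vertex of $X\setminus S$ has all its neighbours in $S$, because all neighbours of a vertex of $X$ lie in $N_G(X)=S$. So each vertex of $X\setminus S$ is isolated in $G-S$, and the hypothesis gives $\a{X\setminus S}\le i(G-S)\le\a S$.

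That inequality alone does not give $\a S\ge\a X$, and this is where care is needed. Split $X=(X\setminus S)\cup(X\cap S)$. Writing $T=S\setminus X$, we need $\a{X\setminus S}\le\a{T}$. To get this, apply the hypothesis to $T$ instead of $S$. A vertex of $X\setminus S$ has no neighbour in $X\setminus S$, because such a neighbour would lie in $N_G(X)=S$. It could still have neighbours in $X\cap S$, which do not lie in $T$, so it need not be isolated in $G-T$. The fix is to choose the Hall violator well. If Hall fails, pick $X$ violating it and consider the subgraph $G[X\cap S]$. Alternatively, use the deficiency version, König–Ore. I would aim for a cleaner route: replace $X$ by $X\setminus S$. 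For $Y=X\setminus S$ we have $N_G(Y)\s S$, and $N_G(Y)\cap Y=\emptyset$ holds automatically, since $Y$ is independent. The hypothesis then gives $\a{N_G(Y)}\ge i(G-N_G(Y))\ge\a Y$. For the part $X\cap S$, I would use that vertices of $X\cap S$ are themselves neighbours of $X$, so they are counted in $S$ at least once, and then show $\a S\ge\a{X\cap S}+\a{N_G(Y)\setminus X}$. Making this final counting airtight, namely that $N_G(Y)$ and $X\cap S$ can be arranged disjointly or that a minimal violator has $X\cap N(X)=\emptyset$, is the main obstacle.

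The step I would try first is this. Take a Hall violator $X$ with $\a{N(X)}-\a X$ minimal and, subject to that, $\a X$ maximal, and show that $X\cap N(X)=\emptyset$. Then $X$ is independent, and $S=N(X)$ isolates all of $X$, which contradicts $i(G-S)\le\a S$. This is the standard argument for fractional perfect matchings.
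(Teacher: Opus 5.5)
\textbf{Verdict: genuine gap at the key step, with a one-line fix.} Your necessity argument is correct. So is the reduction of sufficiency to Hall's condition $|N_G(X)|\ge|X|$ in the bipartite double cover. But you never prove Hall's condition: you leave it as ``the main obstacle.'' You get stuck because of a false assertion, namely that a vertex $u\in X\setminus S$ (with $S=N_G(X)$) ``could still have neighbours in $X\cap S$.''

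It cannot. If $w\in X$ and $uw\in E(G)$, then $u\in N_G(w)\subseteq N_G(X)=S$, contradicting $u\notin S$. Hence $N_G(u)\subseteq S\setminus X=T$ for every $u\in X\setminus S$, so all of $X\setminus S$ is isolated in $G-T$. Your first idea then goes through at once:
\[
|X\setminus S|\le i(G-T)\le |T|=|S\setminus X|,
\]
and adding $|X\cap S|$ to both sides gives $|X|\le|S|=|N_G(X)|$.

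Equivalently, for $Y=X\setminus N_G(X)$ one has $N_G(Y)\cap X=\emptyset$, which is exactly the disjointness you flagged. So $N_G(Y)$ and $X\cap S$ are disjoint subsets of $S$, and $|S|\ge|X\cap S|+|N_G(Y)|\ge|X\cap S|+|Y|=|X|$. Passing from $X$ to the independent set $X\setminus N(X)$, which does not decrease $|X|-|N(X)|$, is precisely the identity $d(G)=id(G)$ of \cref{ijn12311}. The paper does not prove this statement itself; it cites Tutte. However, it uses exactly this combination, \cref{lm123lm32} with \cref{ijn12311}, in the proof of \cref{main}.

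Your fallback extremal argument would not close the gap as stated either. Among violators of maximum deficiency, one of \emph{maximum} size need not satisfy $X\cap N(X)=\emptyset$, so that choice alone forces nothing. Take $K_{1,3}\cup K_2$ with center $c$, leaves $a,b,d$ and edge $uv$. The sets of maximum deficiency $2$ are $\{a,b,d\}\cup Y$ with $Y\subseteq\{u,v\}$. The largest, $\{a,b,d,u,v\}$, meets its neighbourhood $\{c,u,v\}$. Choosing $|X|$ \emph{minimal} would work, because $X\setminus N(X)$ has deficiency at least that of $X$. Once you use the symmetric-adjacency observation above, no extremal choice is needed at all.
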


The \cref{aasd123} was originally proved by Tutte in \cite{tutte19531};
later, a short proof was given in \cite{akbari2008note}.

A graph of order $n$ is said to be $k$-\emph{factor-critical} $(0\le k<n)$
if the removal of any $k$ vertices results in a graph with a perfect
matching. These graphs were introduced independently
by Favaron \cite{favaron1996k} and Yu \cite{qinglin1993characterizations}. A graph $G$ is $k$-\emph{$\{1,2\}$-}factor
critical if the removal of any $k$ vertices results in a graph with
a \emph{$\{1,2\}$-}factor $(0\le k<n)$.

In \cite{tutte1947factorization}, Tutte proved his well-known theorem
characterizing graphs with a perfect matching:

\begin{theorem}
	[\cite{tutte1947factorization}] A graph $G$ has a perfect matching if
	and only if 
	\[
	\textnormal{odd}(G)\le\a S
	\]
	\noindent for all $S\s V(G)$. 
\end{theorem}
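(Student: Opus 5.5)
The statement to prove is Tutte's 1-factor theorem: $G$ has a perfect matching if and only if $\textnormal{odd}(G-S)\le\a S$ for every $S\s V(G)$. (The display reads $\textnormal{odd}(G)$; the intended quantity is the number of odd components of $G-S$.)

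\textbf{Necessity.} Let $M$ be a perfect matching and $S\s V(G)$. Every odd component $C$ of $G-S$ has a vertex that $M$ matches outside $C$. Since $C$ is a component of $G-S$, that partner must lie in $S$. Distinct odd components receive distinct partners in $S$, so $\textnormal{odd}(G-S)\le\a S$.

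\textbf{Sufficiency.} I would use Lovász's edge-maximal counterexample argument.

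First, taking $S=\emptyset$ shows that $n=\a{V(G)}$ is even. Adding edges cannot increase $\textnormal{odd}(G-S)$, because merging two components with an edge never raises the number of odd components. So we may assume $G$ is edge-maximal without a perfect matching: $G$ satisfies the condition, has no perfect matching, but $G+e$ has one for every non-edge $e$. $G$ is not complete, since $K_n$ with $n$ even has a perfect matching.

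Let $U$ be the set of vertices adjacent to all other vertices.

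\textbf{Case 1: every component of $G-U$ is a clique.} Apply the condition with $S=U$, so $G-U$ has at most $\a U$ odd components. Match the vertices of each clique component among themselves, leaving exactly one vertex in each odd component. Match each leftover vertex to a distinct vertex of $U$. The remaining vertices of $U$ are even in number, by parity of $n$, and they induce a clique, so they can be paired. This gives a perfect matching, a contradiction.

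\textbf{Case 2: some component of $G-U$ is not a clique.} Then that component contains vertices $a,b,c$ with $ab,bc\in E$ and $ac\notin E$. Since $b\notin U$, there is a vertex $d$ with $bd\notin E$.

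By maximality, $G+ac$ has a perfect matching $M_1$, which must use $ac$. Likewise $G+bd$ has a perfect matching $M_2$, which must use $bd$.

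Consider $M_1\triangle M_2$. It is a disjoint union of even alternating cycles, and it contains both $ac$ and $bd$.
\begin{itemize}
\item If $ac$ and $bd$ lie on different cycles, let $C$ be the cycle containing $bd$. Then $M_1\triangle C$ is a perfect matching of $G$, a contradiction.
\item If both lie on the same cycle $C$, traverse $C$ from $d$ away from $b$ until first reaching $a$ or $c$; by symmetry say $c$. Combine the path $d\cdots c$, the edge $cb$, and the edges of $M_1$ on $C$ outside that path. This yields an $M_1$-alternating cycle avoiding $ac$ and $bd$. Switching $M_1$ along it gives a perfect matching of $G$, a contradiction.
\end{itemize}

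\textbf{Main obstacle.} The delicate step is the symmetric-difference surgery in Case 2, specifically checking that the alternating structure is preserved when the new edge $bc$ is spliced in. Case 1 is a straightforward counting argument.
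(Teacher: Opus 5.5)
The paper gives no proof here; the theorem is simply quoted from Tutte (1947). You are right that the display should read $\textnormal{odd}(G-S)\le|S|$. Your route (Lov\'asz's edge-maximal counterexample) is the standard one, and necessity, the reduction to an edge-maximal $G$, and Case~1 are correct. The decisive switching step in Case~2, however, fails as written in both subcases, because you always switch $M_1$, and $M_1$ contains the non-edge $ac$.

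In subcase (i), $C$ is the cycle through $bd$ and does not contain $ac$. So $M_1\triangle C$ keeps $ac$ and also gains $bd$, an $M_2$-edge of $C$; it is a perfect matching of $G+ac+bd$, not of $G$. In subcase (ii), switching $M_1$ along \emph{any} cycle that avoids $ac$ leaves $ac$ in the matching, so the claimed conclusion cannot hold. Moreover, the edge set you describe (the path $d\cdots c$, the edge $cb$, and the $M_1$-edges of $C$ off that path) is not a cycle: $c$ is incident to its $M_2$-edge on the path, to $cb$, and to $ca$.

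The fix is to switch the matching that owns the non-edge on the chosen cycle. In (i), take $M_2\triangle C$ instead, or $M_1\triangle C_{ac}$, where $C_{ac}$ is the cycle of $M_1\triangle M_2$ through $ac$. In (ii), let $P$ be the arc of $C$ that starts at $d$ with the edge $dM_1(d)$ and stops at the first of $a,c$ reached, say $c$. Then $b\notin V(P)$ and $ac\notin E(P)$. Since $P$ cannot enter $c$ through $ac$, its last edge is the $M_2$-edge at $c$, so $P$ alternates from an $M_1$-edge to an $M_2$-edge and has even length. Hence $P+cb+bd$ is an even $M_2$-alternating cycle whose only edge outside $E(G)$ is $bd\in M_2$, and switching $M_2$ along it yields a perfect matching of $G$. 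Alternatively, the complementary arc of $C$ from $c$ through $a$ to $b$, closed by the edge $bc$, is an $M_1$-alternating cycle containing $ac$ and avoiding $bd$, and switching $M_1$ along that cycle also works. With this correction your proof is complete.
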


In \cite{favaron1996k},
the Tutte condition was modified to characterize $k$-factor-critical
graphs.

\begin{theorem}
	[\cite{favaron1996k}] A graph $G$ is a $k$-factor-critical graph if
	and only if 
	\[
	\textnormal{odd}(G)\le\a S-k
	\]
	\noindent for all $S\s V(G)$ with $\a S\ge k$. 
\end{theorem}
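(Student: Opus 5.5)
The plan is to reduce both directions to Tutte's theorem applied to $G-T$ for a $k$-set $T$. The single identity driving everything is that deleting $T$ and then a set $S'$ is the same as deleting $S=S'\cup T$:
\[
(G-T)-S' = G-(S'\cup T).
\]
I read the displayed condition as $\textnormal{odd}(G-S)\le\a S-k$, where $\textnormal{odd}$ counts odd components; this is clearly what is intended. Both directions are then a matter of moving between a set $S$ in $G$ with $\a S\ge k$ and a pair $(T,S')$ with $\a T=k$ and $S'\s V(G)\setminus T$.

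\emph{Necessity.} Assume $G$ is $k$-factor-critical and let $S\s V(G)$ with $\a S\ge k$.
\begin{enumerate}
\item If $S=V(G)$, then $G-S$ is empty, so $\textnormal{odd}(G-S)=0\le\a S-k$ and there is nothing to prove.
\item Otherwise choose any $T\s S$ with $\a T=k$. By hypothesis $G-T$ has a perfect matching.
\item Apply Tutte's theorem to $G-T$ with the set $S'=S\setminus T$. This gives $\textnormal{odd}((G-T)-S')\le\a{S'}$.
\item Since $(G-T)-S'=G-S$ and $\a{S'}=\a S-k$, this is exactly $\textnormal{odd}(G-S)\le\a S-k$.
\end{enumerate}

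\emph{Sufficiency.} Assume the inequality holds for every $S$ with $\a S\ge k$. Fix any $T$ with $\a T=k$; we must show $G-T$ has a perfect matching.
\begin{enumerate}
\item Take an arbitrary $S'\s V(G-T)$ and set $S=S'\cup T$. Then $\a S=\a{S'}+k\ge k$.
\item The hypothesis gives $\textnormal{odd}(G-T-S')=\textnormal{odd}(G-S)\le\a S-k=\a{S'}$.
\item Since $S'$ was arbitrary, Tutte's condition holds for $G-T$, so $G-T$ has a perfect matching.
\end{enumerate}

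Parity is handled automatically. Taking $S'=\emptyset$ (that is, $S=T$) gives $\textnormal{odd}(G-T)\le 0$, which forces $n-k$ to be even, consistent with $G-T$ having a perfect matching. The standing assumption $0\le k<n$ guarantees that $G-T$ is nonempty, so the notion is meaningful.

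I do not expect a real obstacle here. The only points needing care are the degenerate cases: $S=V(G)$ in the necessity direction, and the empty graph convention in Tutte's theorem. Both are dispatched by noting that an empty graph has no odd components.
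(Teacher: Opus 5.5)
Your proof is correct. The paper does not prove this statement; it cites it from Favaron and only remarks that it comes from modifying Tutte's condition, and your reduction via $(G-T)-S'=G-(S'\cup T)$ is exactly that standard argument. You were also right to read the condition as $\textnormal{odd}(G-S)\le |S|-k$: the displayed $\textnormal{odd}(G)$ is a typo, and the same slip appears in the paper's statement of Tutte's theorem.
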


Similar ideas allow one to refine \cref{aasd123} in order to easily obtain
the following characterization of $k$-$\{1,2\}$-factor-critical
graphs.

\begin{theorem}[\cite{ksachscritical}\label{asd12392}]
	A graph $G$ is $k$-\emph{$\{1,2\}$-}factor critical
	if and only if 
	\[
	i(G-S)\le\a S-k
	\]
	\noindent for all $S\s V(G)$ such that $\a S\ge k$. 
\end{theorem}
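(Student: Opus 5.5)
We derive the statement from \cref{aasd123} by quantifying over the $k$-sets removed. Here $i(H)$ is the number of isolated vertices of $H$, and for $T\s V(G)$ we have $(G-T)-S'=G-(T\u S')$.

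\textbf{Necessity.} Suppose $G$ is $k$-$\{1,2\}$-factor critical, and let $S\s V(G)$ with $\a S\ge k$. Choose any $T\s S$ with $\a T=k$. By hypothesis $G-T$ has a $\{1,2\}$-factor. Apply \cref{aasd123} to $G-T$ with the set $S'=S\setminus T$:
\[
i(G-S)=i\big((G-T)-S'\big)\le\a{S'}=\a S-k.
\]
The only point to check is that $k<n$, so that $G-T$ is a genuine graph to which \cref{aasd123} applies. This holds by the standing assumption $0\le k<n$.

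\textbf{Sufficiency.} Assume $i(G-S)\le\a S-k$ for every $S$ with $\a S\ge k$. Fix any $T$ with $\a T=k$; we must show $G-T$ has a $\{1,2\}$-factor. By \cref{aasd123} it suffices to check $i((G-T)-S')\le\a{S'}$ for every $S'\s V(G-T)$. Put $S=T\u S'$. Since $S'$ is disjoint from $T$, $\a S=k+\a{S'}\ge k$, and the hypothesis gives
\[
i\big((G-T)-S'\big)=i(G-S)\le\a S-k=\a{S'}.
\]

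There is no genuine obstacle; the argument only translates between subsets of $V(G-T)$ and supersets of $T$. The one step needing care is the bookkeeping that $S'$ is disjoint from $T$, so that $\a S=k+\a{S'}$ exactly. I would also note the convention that a graph with no vertices has an empty $\{1,2\}$-factor. This case does not actually arise, since $k<n$ guarantees $G-T$ is nonempty.
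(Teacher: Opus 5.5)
Your proof is correct and is essentially the argument the paper has in mind. The paper gives no proof of its own: it cites \cite{ksachscritical} and remarks that the statement is obtained by refining \cref{aasd123}, which is exactly your correspondence between sets $S\supseteq T$ with $|T|=k$ and subsets $S'=S\setminus T$ of $V(G-T)$, applied in both directions.
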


A $k$-factor-critical graph $G$ is said to be \emph{minimal} if $G-e$ is not
$k$-factor-critical for any edge $e$ in $G$ \cite{favaron1996minimally}. It is easy to see that if $G$ is a $k$-factor-critical graph, then
$\delta(G)\ge k+1$. Favaron and Shi \cite{favaron1996minimally} conjectured that if $G$
is moreover a minimal $k$-factor-critical graph, then it satisfies $\delta(G)=k+1.$

\begin{conjecture}
	[\cite{favaron1996minimally,zhang2010equivalence}\label{conjetura}] Let $G$ be a minimal $k$-factor-critical
	graph. Then $\delta(G)=k+1.$
\end{conjecture}

Favaron and Shi \cite{favaron1996minimally} confirmed this conjecture for $k\in\{n-6,n-4,n-2\}$.
Guo et al. \cite{GUO202444,Guo2022MinimallyKG,GUO2024113839} confirmed this conjecture for $k\in\{2,n-8,n-10\}$,
and for claw-free graphs \cite{guo2023minimum}. Qiuli--Fuliang--Heping confirmed this
conjecture for planar graphs \cite{li2025minimal}.

Since $k<n$, if $\delta(G)\le k$ we can delete $k$ vertices, among which are all the neighbors of a vertex of minimum degree, and obtain
a graph with isolated vertices, which has no $\{1,2\}$-factor.
This proves the following observation.
\begin{observation}\label{obserpokasdpokm1}
	If $G$ is a $k$-$\{1,2\}$-factor-critical
	graph, then $\delta(G)\ge k+1$. 
\end{observation}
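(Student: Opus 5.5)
We argue by contraposition, following the remark that precedes the statement: assuming $G$ is $k$-$\{1,2\}$-factor-critical of order $n$ with $\delta(G)\le k$, we derive a contradiction.

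Fix a vertex $v$ with $\deg(v)=\delta(G)\le k$, and let $N(v)$ be its neighbourhood, so $\a{N(v)}\le k$. Because $k<n$, the set $V(G)\setminus\{v\}$ has $n-1\ge k$ vertices. Hence $N(v)$ can be enlarged to a set $S$ with $N(v)\s S\s V(G)\setminus\{v\}$ and $\a S=k$: add any $k-\a{N(v)}$ vertices other than $v$.

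The definition of $k$-$\{1,2\}$-factor-criticality says that $G-S$ has a $\{1,2\}$-factor. However, $v\in V(G-S)$ and every neighbour of $v$ lies in $S$, so $v$ is isolated in $G-S$. An isolated vertex cannot lie in any component of a $\{1,2\}$-factor, since every such component is $1$-regular or $2$-regular. So $G-S$ has no $\{1,2\}$-factor, a contradiction, and therefore $\delta(G)\ge k+1$.

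Alternatively, one can use \cref{asd12392} with this same $S$. It gives $i(G-S)\le\a S-k=0$, but $i(G-S)\ge 1$ because $v$ is isolated. The only point needing care is that $S$ exists with exactly $k$ vertices while avoiding $v$, and this is exactly where the hypothesis $k<n$ is used. Otherwise the argument is immediate.
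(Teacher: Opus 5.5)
Your proof is correct and is essentially the paper's argument. The paper also deletes $k$ vertices that include all neighbours of a minimum-degree vertex (possible because $k<n$), and notes that the resulting isolated vertex prevents a $\{1,2\}$-factor; you simply make explicit how $N(v)$ is padded to exactly $k$ vertices avoiding $v$.
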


A $k$-$\{1,2\}$-factor-critical graph is said to be \emph{minimal} if $G-e$ is not
$k$-$\{1,2\}$-factor-critical for any edge $e$ in
$G$.

In contrast to \cref{conjetura} for minimal $k$-factor-critical
graphs, minimal $k$-$\{1,2\}$-factor-critical graphs may have
minimum degree $k+2$, regardless of the order of the graph. For instance,
for $n\ge4$ the complete graph $K_{n}$ is a minimal $(n-3)$-$\{1,2\}$-factor-critical
graph, while $\delta(K_{n})=k+2=n-1$. Moreover, $K_{n}$
is a minimal $(n-2)$-$\{1,2\}$-factor-critical graph.

\begin{conjecture}[\cite{KEVINksachsminimal1}\label{pokk123kjk}]
	Let $G$ be a minimal $k$-$\{1,2\}$-factor-critical
	graph. Then $k+1\le\delta(G)\le k+2.$
\end{conjecture}

In \cite{KEVINksachsminimal1} it is shown that \cref{pokk123kjk} holds for some infinite families.

\begin{theorem}[\cite{KEVINksachsminimal1}\label{asd12300}]
	Let $k\in\{0,1,n-5,n-4,n-3,n-2\}$ and let $G$ be a minimal
	$k$-$\{1,2\}$-factor-critical graph. Then $k+1\le\delta(G)\le k+2.$
\end{theorem}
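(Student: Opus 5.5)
The lower bound $\delta(G)\ge k+1$ is \cref{obserpokasdpokm1}, so only $\delta(G)\le k+2$ needs proof. I would argue by contradiction: assume $\delta(G)\ge k+3$ and take an edge $e=uv$. By minimality, $G-e$ is not $k$-$\{1,2\}$-factor critical. By \cref{asd12392} there is then a set $S$ with $|S|\ge k$ and
\[
i(G-e-S)\ge |S|-k+1 ,
\]
while $i(G-S)\le |S|-k$. Let $I$ be the set of isolated vertices of $G-e-S$. Deleting $e$ only affects $u$ and $v$, so at least one of them, say $u$, lies in $I$ and has all neighbours other than $v$ inside $S$. Hence $|S|\ge \deg(u)-1\ge k+2$. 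Every vertex of $I\setminus\{u,v\}$ has all of its at least $k+3$ neighbours in $S$. This barrier $(S,I)$ is the basic object, and each case below extracts a contradiction from it.

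\emph{Case $k=0$.} Here no counting is needed. $G$ has a $\{1,2\}$-factor $F$. If some edge $e\notin F$ existed, $F$ would still be a $\{1,2\}$-factor of $G-e$, contradicting minimality. So $G=F$ and $\delta(G)\le 2=k+2$.

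\emph{Case $k\ge n-5$.} Here I would count vertices. Since $I\cap S=\emptyset$, we have $|S|-k+1\le |I|\le n-|S|$, so $2|S|\le n+k-1$. Combined with $|S|\ge k+2$ this gives $k\le n-5$, which is an immediate contradiction for $k\in\{n-4,n-3,n-2\}$.

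For $k=n-5$ all inequalities are tight. So $|S|=n-3$, $|I|=3$ and $V=S\cup I$. The two endpoints of $e$ account for at most two vertices of $I$. The third vertex $w\in I$ is isolated already in $G-S$, so $\deg(w)\le |S|=n-3=k+2$, contradicting $\delta\ge k+3$. The only subtle point is that $v$ might lie in $S$ rather than $I$; then $I$ contains two vertices other than $u$, each with degree at most $k+2$, which is again a contradiction.

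\emph{Case $k=1$.} I expect this to be the main obstacle, because the pure vertex count above is not tight for small $k$. My plan is to choose the edge $e$ and the barrier $S$ extremally, for instance $e$ incident to a vertex of minimum degree and $S$ of maximum size. I would then exploit criticality at the sets $S\setminus\{x\}$ for $x\in S$. By \cref{asd12392} applied to $G$ with $S\setminus\{x\}$, returning $x$ to the graph must destroy at least two isolated vertices, so $x$ has at least two neighbours in $I$.

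Next, I would consider the bipartite graph between $I$ and $S$. In it, every vertex of $I\setminus\{u,v\}$ has degree at least $k+3=4$, we have $|I|\ge |S|$, and every $x\in S$ has at least two neighbours in $I$. I would then try a Hall-type or double-counting argument on this bipartite graph. Combined with the fact that $G-x$ has a $\{1,2\}$-factor for every vertex $x$, this should produce a vertex set violating the condition of \cref{asd12392} in $G$ itself, giving the contradiction.
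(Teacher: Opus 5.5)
\textbf{Cases $k=0$ and $k\ge n-5$: correct.} The paper does not prove this theorem; it quotes it. It only indicates that $k=0$ is trivial (your argument for $k=0$ is exactly the one used in the proof of \cref{main2}) and that $k=1$ comes from the cited source. Your vertex count for $k\in\{n-5,n-4,n-3,n-2\}$ is correct and complete. Your ``subtle point'' cannot actually occur: if $u$ becomes isolated only after deleting $e$, then $v$ is its unique neighbour in $G-S$, so $v\notin S$.

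\textbf{Case $k=1$: the gap.} You do not prove this case, and the plan you sketch cannot work as designed. There is first a slip. Passing from $G-e-S$ to $G-(S\setminus\{x\})$ restores both $x$ and $e$, and restoring $e$ alone can absorb both surplus isolated vertices $u$ and $v$. So ``$x$ has at least two neighbours in $I$'' does not follow.

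The real problem is that you use minimality only at the single edge $e$. The data ``$G$ is $1$-$\{1,2\}$-factor-critical, $e$ is incident to a vertex of minimum degree, $G-e$ is not critical, $S$ is a largest barrier'' is compatible with arbitrarily large $\delta(G)$. Here is a counterexample:
\begin{itemize}
\item Take $m\ge 4$ and vertices $t_1,\dots,t_m,a_1,\dots,a_m,y$.
\item Make $\{a_1,\dots,a_m,y\}$ a clique, add all edges $t_ia_j$ with $(i,j)\ne(1,1)$, and let $e=t_1y$.
\item An independent set meets the clique in at most one vertex. A short check then gives $|N(X)|>|X|$ for every nonempty independent $X$, so $G$ is $1$-$\{1,2\}$-factor-critical by \cref{apoksjasd}.
\item $\delta(G)=m$, and this minimum is attained at $t_1$.
\item $N_{G-e}(\{t_1,\dots,t_m\})=\{a_1,\dots,a_m\}$, so $G-e$ is not critical.
\end{itemize}
Your barrier here is $S=\{a_1,\dots,a_m\}$ and $I=\{t_1,\dots,t_m,y\}$. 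No larger barrier exists, since $|S'|\le n-|S'|$ forces $|S'|\le m$. Every property you list for the bipartite graph between $I$ and $S$ holds. So no Hall-type or double-counting argument on $(S,I)$, combined with criticality of $G$, can reach a contradiction.

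\textbf{What is missing.} You need a way to exploit minimality at edges other than $e$. In the example, deleting any $t_ia_j$ with $i,j\ge 2$ keeps $G$ critical, which is why it is not minimal.

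This is also exactly where the paper's own single-edge argument in \cref{main} needs planarity. Euler's formula bounds the number of edges between the barrier and its neighbourhood, which criticality alone cannot do; note that the example contains $K_{m+1}$. For general graphs the paper takes $k=1$ from the cited source. There, the Bourjolly--Pulleyblank ear-pendant decomposition of $2$-bicritical graphs gives the stronger conclusion $\delta(G)=2$ unless $G=K_4$. Some such global structural tool, or a genuinely multi-edge extremal argument, is needed to close your proof.
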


\begin{theorem}[\cite{KEVINksachsminimal1}]
	Let $G$ be a minimal $k$-$\{1,2\}$-factor-critical
	claw-free graph of order $n$ such that $n-k$ is even and $\kappa(G)>k$. Then $\delta(G)=k+1$. 
\end{theorem}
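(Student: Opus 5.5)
Since the statement concerns only the minimum degree, \cref{obserpokasdpokm1} already gives $\delta(G)\ge k+1$, and the plan is to argue by contradiction. We assume $\delta(G)\ge k+2$, fix an arbitrary edge $e=uv$, and use minimality. Then $G-e$ is not $k$-$\{1,2\}$-factor-critical, so by \cref{asd12392} there is a set $S$ with $|S|\ge k$ and $i(G-e-S)\ge |S|-k+1$. Since $G$ itself satisfies $i(G-S)\le|S|-k$, deleting $e$ must create a new isolated vertex. Hence $u,v\notin S$, and at least one endpoint, say $u$, satisfies $N_G(u)\setminus S=\{v\}$. Let $I$ denote the set of isolated vertices of $G-e-S$, so $|I|\ge|S|-k+1$.

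\textbf{Step 1: count the edges between $I$ and $S$.} Every $w\in I\setminus\{u,v\}$ has all of its at least $k+2$ neighbours in $S$. Each of $u,v$ that lies in $I$ has at least $k+1$ neighbours in $S$. The set $I$ is independent in $G$ except possibly for the edge $uv$. Claw-freeness then implies that each $s\in S$ has at most two neighbours in $I$ (three neighbours containing at most one edge still give an induced claw after a short check; this needs care when $u,v\in I$). Comparing the two counts gives
\[
(k+2)|I|-2\le e(I,S)\le 2|S|\le 2(|I|+k-1),
\]
so $k|I|\le 2k$, that is, $|I|\le 2$ (for $k\ge1$; the case $k=0$ is covered by \cref{asd12300}). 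Consequently $|S|\le k+1$.

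\textbf{Step 2: the case $|S|=k$.} Here $u$ has at least $k+1$ neighbours in $S$, which is impossible since $|S|=k$. So this case is excluded.

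\textbf{Step 3: the case $|S|=k+1$, which I expect to be the main obstacle.} Now $|I|\ge2$. Any vertex of $I$ other than $u,v$ would need $k+2$ neighbours in $S$, which is impossible. Hence $I=\{u,v\}$, both $u$ and $v$ are adjacent to all of $S$, $\deg u=\deg v=k+2$, and $\{u,v\}$ is a component of $G-S$. The hypotheses $\kappa(G)>k$ and $n-k$ even must now be used. Since $n-(k+1)$ is odd, the remaining part $G-S-\{u,v\}$ has odd order. I would pick a suitable $s\in S$ and the $k$-set $T=(S\setminus\{s\})$ together with, or instead of, a vertex of $\{u,v\}$. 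The graph $G-T$ is then connected (by $\kappa(G)>k$), claw-free, and of even order. By the Sumner--Las Vergnas theorem it therefore has a perfect matching, and this should force a structure contradicting the component $\{u,v\}$ or the odd order of $G-S-\{u,v\}$. A cleaner alternative may be to choose $e$ cleverly from the start, for example an edge inside a largest component, so that this configuration cannot arise. Either way, this is the delicate step, and it is exactly where both the parity assumption and the connectivity assumption enter.
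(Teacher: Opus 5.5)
\textbf{Step 3 has a genuine gap that cannot be closed.} Your approach fails there, not merely because the step is delicate: the configuration you reach cannot be refuted using the bad set of one arbitrary edge. Take $k=1$ and let $G$ have vertices $u,v,s_1,s_2,r_1,r_2,r_3$, where $s_1,s_2$ are adjacent to every other vertex, $uv\in E(G)$, and $r_1r_2r_3$ is a triangle. This graph satisfies:
\begin{itemize}
\item it is claw-free, since every neighbourhood has independence number at most $2$;
\item $\kappa(G)=2>k$, $n-k=6$ is even, and $\delta(G)=3=k+2$;
\item it is $1$-$\{1,2\}$-factor-critical, by Sumner--Las Vergnas applied to each $G-x$.
\end{itemize}
For $e=uv$, the set $S=\{s_1,s_2\}$ gives exactly your Step 3 situation: $|S|=k+1$, $I=\{u,v\}$, both $u$ and $v$ complete to $S$, and $G-S-\{u,v\}$ of odd order. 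So no argument that uses only claw-freeness, $\kappa(G)>k$, parity, $\delta(G)\ge k+2$ and the bad set of one edge can reach a contradiction. Minimality must be used at some other, well-chosen edge; here $G$ is not minimal, since $G-r_1r_2$ is still $1$-$\{1,2\}$-factor-critical by \cref{apoksjasd}. Your proposal gives no rule for choosing such an edge. Applying Sumner--Las Vergnas to $G-T$ for a single $k$-set $T$ only produces a perfect matching, and that matching exists in this example too.

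\textbf{Errors before Step 3.} In Step 1 the claw bound fails when $u,v\in I$. If $s\in S$ is adjacent to $u$, $v$ and some $w\in I\setminus\{u,v\}$, then $\{s;u,v,w\}$ is not an induced claw, because $uv\in E(G)$. Counting only over the independent set $I\setminus\{u\}$ gives $(k+2)|I|-k-3\le 2|S|\le 2(|I|+k-1)$, that is, $k|I|\le 3k+1$. Hence configurations with $|I|\ge 3$ and $|S|\ge k+2$ are not excluded. Also, \cref{asd12300} for $k=0$ gives only $1\le\delta(G)\le 2$, not $\delta(G)=1$. That case is easy to do directly: a minimal graph with a $\{1,2\}$-factor is a disjoint union of copies of $K_2$ and odd cycles, and connectivity plus even order leaves only $K_2$.

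\textbf{A global route.} The paper only quotes this theorem from \cite{KEVINksachsminimal1}, so there is no proof here to compare with. However, the hypotheses fit a short global argument that uses Sumner--Las Vergnas, which you already invoke. For every $T\subseteq V(G)$ with $|T|=k$, the graph $G-T$ is connected because $\kappa(G)>k$, claw-free, and of even order, so it has a perfect matching. Hence $G$ is $k$-factor-critical. A perfect matching is a $\{1,2\}$-factor, so no $G-e$ is $k$-factor-critical. Therefore $G$ is a minimal $k$-factor-critical claw-free graph, and the claw-free case of the Favaron--Shi conjecture \cite{guo2023minimum} gives $\delta(G)=k+1$.
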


In this paper, we show that \cref{pokk123kjk} holds for $k$-planar graphs, that is, graphs in which the deletion of any set of $k$ vertices yields a planar graph.

The paper is organized as follows. In \cref{sss0} we present the general context of the problem and introduce the fundamental concepts. In \cref{sss1} we fix the notation that will be used throughout the article. Finally, in \cref{sss2} we state and prove the main results.

\section{Preliminaries}\label{sss1}
All graphs considered in this paper are finite, undirected, and simple. 
For any undefined terminology or notation, we refer the reader to 
Lovász and Plummer \cite{LP} or Diestel \cite{Distel}.

Let \( G = (V, E) \) be a simple graph, where \( V = V(G) \) is the finite set of vertices and \( E = E(G) \) is the set of edges, with \( E \subseteq \{\{u, v\} : u, v \in V, u \neq v\} \). We denote the edge \( e=\{u, v\} \) as \( uv \). A subgraph of \( G \) is a graph \( H \) such that \( V(H) \subseteq V(G) \) and \( E(H) \subseteq E(G) \). A subgraph \( H \) of \( G \) is called a \textit{spanning} subgraph if \( V(H) = V(G) \). 

Let \( e \in E(G) \) and \( v \in V(G) \). We define \( G - e := (V, E \setminus \{e\}) \) and \( G - v := (V \setminus \{v\}, \{uw \in E : u,w \neq v\}) \). If \( X \subseteq V(G) \), the \textit{induced} subgraph of \( G \) by \( X \) is the subgraph \( G[X]=(X,F) \), where \( F:=\{uv \!\in\! E(G) : u, v \!\in \! X\} \).

The number of vertices in a graph $G$ is called the \textit{order} of the graph and denoted by $\left|G\right|$.
A \textit{cycle} in $G$ is called \textit{odd} (resp. \textit{even}) if it has an odd (resp. even) number of edges.

For a vertex $v\in V(G)$, the \emph{neighborhood} of $v$ is
\[
N_G(v)=\{u\in V(G): uv\in E(G)\}.
\]
When no confusion arises, we write $N(v)$ instead of $N_G(v)$. For a set $S\subseteq V(G)$, the \emph{neighborhood} of $S$ is
\[
N_G(S)=\bigcup_{v\in S} N_G(v).
\]

\noindent The \emph{degree} of a vertex $v\in V(G)$ is
$
\deg_G(v)=|N_G(v)|.
$
\noindent The \emph{minimum degree} of $G$ is
$
\delta(G)=\min\{\deg_G(v): v\in V(G)\}.
$
A graph $G$ is called \emph{$r$-regular} if $\deg_G(v)=r$ for every
$v\in V(G)$. A vertex $v\in V(G)$ is called an \emph{isolated vertex} if
$
\deg_G(v)=0.
$
The number of isolated vertices of a graph $G$ is denoted by $i(G)$.

A \textit{matching} \(M\) in a graph \(G\) is a set of pairwise non-adjacent edges. The \textit{matching number} of \(G\), denoted by  \(\mu(G)\), is the maximum cardinality of any matching in \(G\). Matchings induce an involution on the vertex set of the graph: \(M:V(G)\rightarrow V(G)\), where \(M(v)=u\) if \(uv \in M\), and \(M(v)=v\) otherwise. If \(S, U \subseteq V(G)\) with \(S \cap U = \emptyset\), we say that \(M\) is a matching from \(S\) to \(U\) if \(M(S) \subseteq U\). A matching $M$ is \emph{perfect} if $M(v)\neq v$ for every vertex
of the graph.

A vertex set \( S \subseteq V \) is \textit{independent} if, for every pair of vertices \( u, v \in S \), we have \( uv \notin E \). 
The number of vertices in a maximum independent set is denoted by \( \alpha(G) \).  A \textit{bipartite} graph is a graph whose vertex set can be partitioned into two disjoint independent sets.

\section{Main Results}\label{sss2}

We briefly recall the notion of graph minors. A graph $H$ is said
to be a minor of a graph $G$ if $H$ can be obtained from $G$ by
a sequence of vertex deletions, edge deletions, and edge contractions.
A graph is called \emph{planar} if it admits a drawing in the plane
in which no two edges intersect except possibly at their common endpoints.
The complete graph on $n$ vertices and the complete bipartite graph
with parts of size $n$ and $m$ are denoted and $K_{n}$ , $K_{3,3}$,
respectively. The following classic theorem, named Kuratowski\textquoteright s
theorem, gives a characterization of planar graphs.

\begin{theorem}
	 [\cite{kuratowski1930probleme,wagner1937eigenschaft}\label{asdij120io3}] A graph $G$ is planar if and
only if it contains neither $K_{5}$ nor $K_{3,3}$ as a minor. 
\end{theorem}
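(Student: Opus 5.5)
Since this is the classical Kuratowski--Wagner theorem, which the paper only cites, I would reproduce the standard Thomassen-style proof rather than look for anything new. The two directions are handled separately.

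For necessity, I would first note that planarity is closed under taking minors. Deleting vertices or edges from a plane drawing clearly leaves a plane drawing. Contracting an edge $uv$ can be realized in the drawing by sliding $u$ along the curve representing $uv$ into $v$ and rerouting $u$'s other edges alongside that curve, so no crossings are created. It then suffices to show that $K_5$ and $K_{3,3}$ are non-planar. Both follow from Euler's formula $n-m+f=2$ for connected plane graphs.
\begin{itemize}
\item For $K_5$, every face has length at least $3$, so $m\le 3n-6=9<10$.
\item For $K_{3,3}$, the graph is bipartite, so every face has length at least $4$, giving $m\le 2n-4=8<9$.
\end{itemize}

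For sufficiency, I would take a counterexample $G$ with $|V(G)|+|E(G)|$ minimal: $G$ is non-planar and has no $K_5$ or $K_{3,3}$ minor. The steps, in order, are:
\begin{enumerate}
\item Show $G$ is $3$-connected. If $G$ has a cut set $\{x,y\}$ of size at most $2$, split $G$ along it into pieces $G_1,G_2$, and add the edge $xy$ to each piece when $|\{x,y\}|=2$. Each $G_i+xy$ is a minor of $G$, since $xy$ can be obtained by contracting a path through the other side. So each $G_i+xy$ is still $K_5$- and $K_{3,3}$-minor-free, and by minimality it is planar. The two embeddings can be glued along the edge $xy$ (chosen on the outer face) to embed $G$, a contradiction.
\item Prove the lemma that every $3$-connected graph on at least $5$ vertices has an edge $e$ with $G/e$ still $3$-connected. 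I would argue by contradiction in the usual way: for each edge $xy$ pick $z$ with $\{x,y,z\}$ separating, and choose $xy,z$ to minimize the smallest component; this yields a smaller component, a contradiction. Then apply the lemma to $G$.
\item Since $G/e$ is a minor of $G$, it is minor-free, hence planar by minimality. Fix a plane embedding of $G/e$ and let $w$ be the contracted vertex. Because $G/e-w$ is $2$-connected, the face of $G/e-w$ containing $w$ is bounded by a cycle $C$.
\item Mark on $C$ the neighbours of $x$ and of $y$ (other than $x,y$ themselves). If the $y$-neighbours all lie in one segment of $C$ between two consecutive $x$-neighbours, then $x$ and $y$ can be re-embedded inside that face, and $G$ is planar, a contradiction.
\item Otherwise the neighbours interlace, and I would do the case analysis. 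If $x$ and $y$ share three neighbours on $C$, these together with $C$ give a $K_5$ minor. If instead there are alternating neighbours $x_1,y_1,x_2,y_2$ around $C$, they give a $K_{3,3}$ subdivision. Either outcome is a contradiction.
\end{enumerate}

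The main obstacle I expect is the contractible-edge lemma in step 2 together with the interlacing case analysis in step 5. The first is a delicate extremal argument, and the second needs care to verify that the cases (three shared neighbours, or an alternating pattern) are exhaustive. Everything else is routine bookkeeping with embeddings and Euler's formula.
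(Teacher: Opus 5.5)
The paper gives no proof of this statement. It quotes it from Kuratowski and Wagner as a black box and invokes it only to certify that the graph of \cref{1231ss12} is non-planar, so only the easy direction (a $K_5$ or $K_{3,3}$ minor rules out planarity) is ever used. Your first paragraph, minor-closedness of planarity plus the Euler bounds $m\le 3n-6$ and $m\le 2n-4$, proves exactly that direction. Your sufficiency argument is the standard Thomassen proof and is correct, so there is no paper proof to compare it with; you simply supply what the paper takes on trust.

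A few steps deserve one sentence each when written out.
\begin{enumerate}
\item In step 1, $G_i+xy$ is strictly smaller than $G$: the other side contributes at least one vertex outside $\{x,y\}$ and at least one edge, which pays for the added $xy$. The $x$--$y$ path through the other side exists because, once the disconnected and cut-vertex cases are disposed of, every component of $G-\{x,y\}$ contains a neighbour of $x$ and a neighbour of $y$. Otherwise a single vertex would separate $G$.
\item In step 2, the key move is as follows. Choose $xy$ and $z$ so that $G-\{x,y,z\}$ has a component $D$ of minimum size, and take a neighbour $v\in D$ of $z$. The edge $zv$ yields a separator $\{z,v,w\}$. Because $xy$ is an edge, some component of $G-\{z,v,w\}$ avoids both $x$ and $y$, and that component lies in $D-v$, contradicting minimality.
\item In step 5, exhaustiveness is quick. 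If $y$ has a neighbour $u$ in the interior of some segment $x_iCx_{i+1}$, then some neighbour $v$ of $y$ lies outside that closed segment, and $x_i,u,x_{i+1},v$ interlace. Otherwise every neighbour of $y$ on $C$ is a neighbour of $x$. If there are at least three, you get a subdivided $K_5$. If there are exactly two, they are non-consecutive among the neighbours of $x$, so they interlace with two further neighbours of $x$, giving a subdivided $K_{3,3}$.
\end{enumerate}

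Finally, your minimal-counterexample handling of step 1 only ever passes to minors of $G$. Unlike the textbook reduction via edge-maximal graphs, it therefore never needs to check that gluing two $K_5$- and $K_{3,3}$-minor-free graphs along an edge preserves minor-freeness.
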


A graph is said to be $k$-planar if $G-S$ is planar for every $S\s V(G)$ with
$\a S=k$. In particular, every planar graph of order $n$ is $k$-planar
for each $k=0,\dots,n$. In \cref{1231ss12} a non-planar graph is shown,
which can be verified by applying \cref{asdij120io3}. However, as observed
in \cref{1231szzzs12}, the deletion of any vertex yields
a planar subgraph. Consequently, this non-planar graph turns out to be
$1$-planar.

\begin{figure}[H]
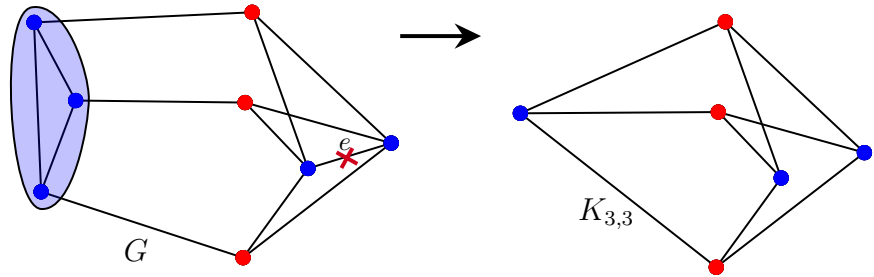

	
	\begin{center}

		\tikzset{every picture/.style={line width=0.75pt}} 
		


	\end{center}

	\caption{Example of a non-planar graph}
	
	\label{1231ss12}
	
\end{figure}

\begin{figure}[H]
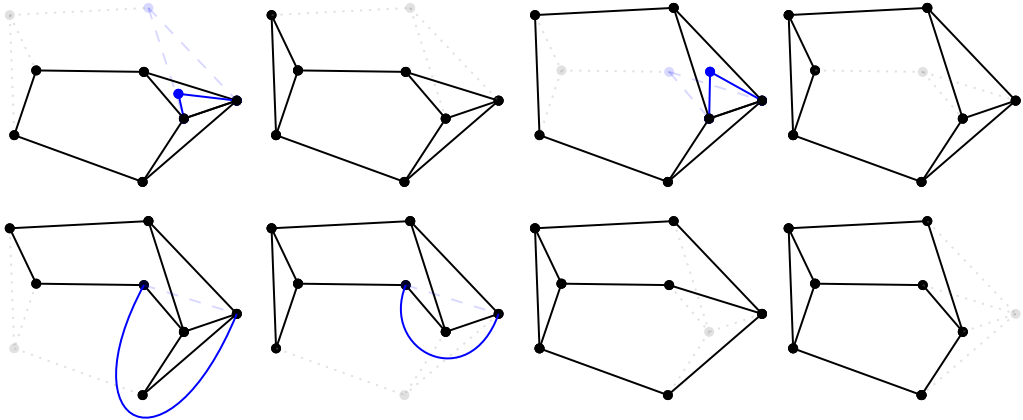

	
	\begin{center}

		\tikzset{every picture/.style={line width=0.75pt}} 
		
		%

	\end{center}

	\caption{Example of a non-planar and $1$-planar graph}
	
	\label{1231szzzs12}
	
\end{figure}

\begin{lemma}[\label{apoksjasd}\cite{KEVINcuatriupla}] $G$ is a $1$-$\{1,2\}$-factor-critical graph if
	and only if $\a S<\a{N(S)}$ for every nonempty independent set
	$S\s V(G)$.
\end{lemma}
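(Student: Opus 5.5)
We derive this lemma from \cref{asd12392} with $k=1$: $G$ is $1$-$\{1,2\}$-factor critical if and only if $i(G-T)\le\a T-1$ for every $T\s V(G)$ with $\a T\ge1$. The proof translates between sets $T$ that produce isolated vertices and independent sets $S$ together with their neighbourhoods.

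For necessity, assume $G$ is $1$-$\{1,2\}$-factor critical and let $S$ be a nonempty independent set. Take $T=N(S)$. Every vertex of $S$ has all of its neighbours in $T$, and it is not in $T$ because $S$ is independent. Hence each vertex of $S$ is isolated in $G-T$, so $i(G-T)\ge\a S$.

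We still need $\a T\ge1$ before \cref{asd12392} applies. If $N(S)=\emptyset$, then $S$ contains an isolated vertex of $G$, and \cref{obserpokasdpokm1} rules this out since it gives $\delta(G)\ge2$. Therefore $\a S\le i(G-T)\le\a{N(S)}-1$, which is $\a S<\a{N(S)}$.

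For sufficiency, assume $\a S<\a{N(S)}$ for every nonempty independent $S$, and let $T$ be any set with $\a T\ge1$. Let $I$ be the set of isolated vertices of $G-T$. If $I=\emptyset$, the inequality $0\le\a T-1$ holds trivially.

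If $I\neq\emptyset$, then $I$ is independent, and $N(I)\s T$ because every neighbour of a vertex isolated in $G-T$ lies in $T$. The hypothesis then gives $i(G-T)=\a I<\a{N(I)}\le\a T$. Since both sides are integers, $i(G-T)\le\a T-1$, and \cref{asd12392} shows that $G$ is $1$-$\{1,2\}$-factor critical.

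The only delicate point is the edge case in the necessity direction: \cref{asd12392} requires $\a T\ge k=1$, and this is where \cref{obserpokasdpokm1} is used to exclude $N(S)=\emptyset$. One should also keep the implicit requirement $1<n$ in view. Otherwise the argument is a direct translation in both directions.
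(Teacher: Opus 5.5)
Your argument is correct. The paper gives no proof of this lemma; it is imported from \cite{KEVINcuatriupla}. So what you give is a self-contained derivation rather than a reconstruction of the paper's proof.

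You obtain the lemma as the case $k=1$ of the Tutte-type characterization in \cref{asd12392}. For necessity you take $T=N(S)$, which isolates every vertex of the independent set $S$, and \cref{obserpokasdpokm1} rules out $N(S)=\emptyset$. For sufficiency you note that the isolated vertices of $G-T$ form an independent set $I$ with $N(I)\subseteq T$.

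The advantage of this route is that the restriction to independent sets comes for free. The only sets that can violate the inequality in \cref{asd12392} are sets of isolated vertices, and these are automatically independent. Had you started instead from the Hall-type \cref{lm123lm32} applied to each $G-v$, the sufficiency direction would need the reduction $d(G-v)=id(G-v)$ of \cref{ijn12311} to pass from arbitrary sets to independent ones. That is the same device the paper uses in the $k=0$ part of the proof of \cref{main}.

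The loose end you flagged closes immediately. Applying the hypothesis to a singleton $S=\{v\}$ gives $\deg(v)\ge 2$, so $n\ge 3$ and the standing requirement $1<n$ holds in the sufficiency direction. In the necessity direction it is part of the definition.
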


Graphs satisfying the condition of \cref{apoksjasd} are known as $2$-bicritical graphs, originally introduced in \cite{pulleyblank1979minimum}. The class of $2$-bicritical graphs can be regarded as the structural counterpart of König--Egerváry graphs \cite{larson2011critical}. In recent works, several new properties of $2$-bicritical graphs have been established; see, for instance, \cite{kevinSDKECHAR, kevinSDKEGE, kevinPOSYFACTOR}.

\begin{lemma}
	[\label{lm123lm32}\cite{KEVINcuatriupla}] $G$ has a $\{1,2\}$-factor if and only if $\a S\le\a{N(S)}$
	for every set $S\s V(G)$.
\end{lemma}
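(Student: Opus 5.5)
We prove the lemma via \cref{aasd123}, which states that $G$ has a $\{1,2\}$-factor if and only if $i(G-T)\le\a T$ for every $T\s V(G)$. We show that this condition is equivalent to the neighborhood condition $\a S\le\a{N(S)}$ for all $S\s V(G)$, one direction at a time.

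For the forward direction, assume $\a S\le\a{N(S)}$ for every $S$, and fix $T\s V(G)$. Let $I$ be the set of isolated vertices of $G-T$. Every neighbor of a vertex of $I$ lies in $T$, so $N(I)\s T$. The hypothesis applied to $S=I$ gives
\[
i(G-T)=\a I\le\a{N(I)}\le\a T .
\]
Hence Tutte's condition holds, and \cref{aasd123} yields a $\{1,2\}$-factor.

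For the converse, suppose $G$ has a $\{1,2\}$-factor $F$, and let $S\s V(G)$. The direct argument is to build an injection $S\to N(S)$ from $F$. Each component of $F$ is either an edge or a cycle. Orient every cycle cyclically, and map each vertex $v$ to its successor on its cycle, or to its partner if its component is an edge. This map $\sigma$ is a permutation of $V(G)$ with $v\sigma(v)\in E(G)$ for every $v$. Therefore $\sigma$ restricted to $S$ is injective and lands in $N(S)$, which gives $\a S\le\a{N(S)}$.

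An alternative for the converse is to use \cref{aasd123} again. Take $S$ and put $T=N(S)$. Every vertex of $S\setminus N(S)$ has all its neighbors in $N(S)$, so it is isolated in $G-N(S)$. This gives $\a{S\setminus N(S)}\le\a{N(S)}$, which is weaker than what we need when $S$ meets $N(S)$. The permutation argument avoids this issue, so we use it.

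The main subtlety is the converse for sets $S$ that are not independent, and the permutation $\sigma$ handles exactly that case. No planarity or criticality hypotheses are involved, so the proof is short.
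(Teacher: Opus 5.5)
Your proof is correct. The paper gives no proof of this lemma to compare against; it simply imports it from \cite{KEVINcuatriupla}.

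Measured against the tools the paper does state, your route is the most economical one. The direction from the neighborhood condition to a $\{1,2\}$-factor reduces to \cref{aasd123} by applying the hypothesis to the set $I$ of isolated vertices of $G-T$, using $N(I)\subseteq T$. The other direction uses the fact that a $\{1,2\}$-factor yields a permutation $\sigma$ of $V(G)$ with $v\sigma(v)\in E(G)$ for all $v$. This permutation injects any $S$ into $N(S)$. This is exactly where the paper's convention $N(S)=\bigcup_{v\in S}N(v)$ matters, since it lets $N(S)$ meet $S$.

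The standard self-contained alternative is Hall's theorem on the bipartite double cover of $G$ (classes $V',V''$, with $u'v''$ an edge whenever $uv\in E(G)$). Its perfect matchings are precisely such permutations $\sigma$, and each such $\sigma$ conversely gives a $\{1,2\}$-factor. Hall's condition there reads literally $|S|\le|N(S)|$, so both directions come at once without Tutte's theorem. That route buys independence from \cref{aasd123}; yours shows the lemma is just a reformulation of it.

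Your dismissal of the Tutte route for the converse applies only to the choice $T=N(S)$. Put $S_2=S\setminus N(S)$. Then $N(S_2)\cap S=\emptyset$, and every vertex of $S_2$ is isolated in $G-N(S_2)$. Also $S\cap N(S)$ and $N(S_2)$ are disjoint subsets of $N(S)$, so $|S|=|S\cap N(S)|+|S_2|\le|S\cap N(S)|+|N(S_2)|\le|N(S)|$. Together with your first direction, this shows the two conditions are equivalent by elementary means, so the Hall argument in fact reproves \cref{aasd123}.
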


The number $d_{G}(X)=\a X-\a{N(X)}$ is the \emph{difference} of the set
$X\s V(G)$, and $d(G)=\max\left\{ d_{G}(X):X\s V(G)\right\}$ is called
the \emph{critical difference} of $G$. A set $U\s V(G)$ is
\emph{critical} if $d_{G}(U)=d(G)$ \cite{zhang1990finding}. The number
$id(G)=\max\left\{ d_{G}(I): I\text{ is an independent set of }G\right\}$
is called the \emph{critical independence difference} of $G$. If
$I\s V(G)$ is independent and $d_{G}(I)=id(G)$, then $I$ is called
\emph{critical independent} \cite{zhang1990finding}. It is known that the equality
$d(G)=id(G)$ holds for every graph $G$ \cite{zhang1990finding}.

\begin{lemma}
	[\cite{zhang1990finding}\label{ijn12311}] For every graph $G$, we have $d(G)=id(G)$.
\end{lemma}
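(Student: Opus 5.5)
We need to prove $d(G)=id(G)$, the result of Zhang. The inequality $id(G)\le d(G)$ is immediate, since every independent set is a subset of $V(G)$. For the reverse inequality, take a critical set $X$, so $d_G(X)=d(G)$, and exhibit an independent set $I$ with $d_G(I)\ge d_G(X)$.

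The natural candidate is $I:=X\setminus N(X)$, the vertices of $X$ with no neighbour inside $X$. This set is independent: if $u,v\in I$ were adjacent, then $v\in N(u)\subseteq N(X)$, contradicting $v\notin N(X)$. Set $A:=X\cap N(X)$, so that $X=I\,\dot\cup\, A$. The goal is then to show
\[
\a{N(X)}-\a{N(I)}\ \ge\ \a A ,
\]
because this gives $d_G(I)=\a X-\a A-\a{N(I)}\ge \a X-\a{N(X)}=d_G(X)$.

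Since $N(I)\subseteq N(X)$, the left-hand side equals $\a{N(X)\setminus N(I)}$. So we need $\a{N(X)\setminus N(I)}\ge \a A$, and this is the main obstacle: $A$ itself need not lie outside $N(I)$. My plan is to use criticality of $X$ rather than a direct count.

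\begin{itemize}
\item First, consider $Y:=X\setminus (A\cap N(I))$ and note that $N(Y)$ no longer contains $A\cap N(I)$ via $I$. This is messy.
\item Instead, apply the standard matching argument. Let $B:=N(X)\setminus N(I)$, and consider the bipartite graph $H$ between $A$ and $B$.
\item By Hall's theorem, either $H$ has a matching saturating $A$, which gives $\a B\ge\a A$ and we are done, or there is $A'\subseteq A$ with $\a{N_H(A')}<\a{A'}$.
\item In the second case, set $X':=X\setminus A'$ (keeping $I\subseteq X'$). The vertices lost from $N(X)$ are contained in $N_H(A')$. Indeed, any vertex of $N(X)$ lying in $N(I)$ is kept because $I\subseteq X'$. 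A vertex of $B$ that is adjacent to $A'$ and lost must lie in $N_H(A')$. A vertex of $A'$ itself might leave $N(X)$ too.
\end{itemize}

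I need to handle this carefully: vertices in $A'\cap B$ lie in $N_H(A')$ only when they have neighbours in $A'$. Each $a\in A'$ has a neighbour in $X$. If that neighbour is in $I$, then $a\in N(I)$ and $a$ stays in $N(X')$. Otherwise $a\in B$, and $a$ is adjacent to some vertex of $A$, possibly outside $A'$. So the lost set is at most $N_H(A')$, giving
\[
d_G(X')\ \ge\ \a X-\a{A'}-\a{N(X)}+\a{N_H(A')}\ >\ \ldots
\]
This has the wrong sign. The fix is to add rather than remove.

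I would therefore switch to the cleaner classical route: use supermodularity. The function $d_G$ is supermodular, i.e. $d_G(X\cap Y)+d_G(X\cup Y)\ge d_G(X)+d_G(Y)$, because $\a{N(X\cup Y)}=\a{N(X)\cup N(Y)}$ and $N(X\cap Y)\subseteq N(X)\cap N(Y)$. Hence critical sets are closed under union and intersection. Take a critical set $X$ that is minimal under inclusion.

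Then I claim $X$ is independent. Suppose $X\cap N(X)=A\neq\emptyset$. By minimality, every proper subset $Z\subsetneq X$ satisfies $d_G(Z)<d_G(X)$. In particular this holds for $Z=X\setminus A$ when $A\ne X$. Now apply Hall's theorem to the bipartite graph between $X$ and $N(X)$, in which $x\in X$ is joined to $y\in N(X)$ whenever $xy\in E(G)$; a vertex of $A$ may appear on both sides as separate copies.

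\begin{itemize}
\item Minimality gives, for every nonempty $T\subseteq X$, the bound $\a{N(X)\setminus N(X\setminus T)}<\a T$. Equivalently, surplus conditions of this kind let us match $N(X)$ into $X$.
\item Take a maximum matching $M$ from $N(X)$ into $X$. It has size $\a{N(X)}$; otherwise a Hall violator yields a subset of $X$ of larger difference, contradicting criticality.
\item Using $M$ and alternating-path arguments, move each vertex of $A$ to a partner. The aim is to show that $X\setminus\{a\}$ is still critical for some $a\in A$, contradicting minimality.
\end{itemize}

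The key and hardest step is this exchange. I would first try the following. Pick $a\in A$ with a neighbour $a'\in X$. Then $a\in N(X\setminus\{a\})$ still holds, because $a'\in X\setminus\{a\}$. So removing $a$ from $X$ loses $a$ from $X$ but loses only those neighbours of $a$ that have no other neighbour in $X$. If every neighbour of $a$ in $N(X)$ had another neighbour in $X$, then $d$ drops by exactly $1$.

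That by itself does not contradict minimality. The conclusion would have to come from the $M$-structure, for instance by choosing $a$ so that its $M$-partner lies in $X$.
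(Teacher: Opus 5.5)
The paper only cites this lemma from Zhang and gives no proof, so I am judging your proposal on its own. There is a genuine gap, and it appears right after you set up an argument that was about to work. Your reduction to $\a{N(X)\setminus N(I)}\ge\a A$ with $I=X\setminus N(X)$ and $A=X\cap N(X)$ is correct. However, your claim that ``$A$ itself need not lie outside $N(I)$'' is false.

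Every $i\in I$ satisfies $i\notin N(X)$, i.e.\ $i$ has no neighbour in $X$, so $N(I)\cap X=\emptyset$. This is the same reasoning you used to prove $I$ independent, applied with $u\in X$ instead of $u\in I$. Since $A\subseteq X$, it follows that $A\cap N(I)=\emptyset$. Combined with $A\subseteq N(X)$ and $N(I)\subseteq N(X)$, this gives $N(I)\,\dot\cup\,A\subseteq N(X)$. Hence $\a{N(X)}-\a{N(I)}\ge\a A$, and so $d_G(I)\ge d_G(X)$. This holds for every $X\subseteq V(G)$, not only critical ones, so neither Hall's theorem nor supermodularity is needed.

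Because you discarded this direct route, nothing after that point is completed. Your first Hall attempt ends, by your own account, with the wrong sign. Your supermodularity argument is correct: $d_G$ is supermodular and critical sets are closed under union and intersection. But it only reduces the problem to showing that an inclusion-minimal critical set is independent. The exchange step meant to prove that is never carried out; it ends with the hope that the conclusion ``would have to come from the $M$-structure,'' which is not an argument. With the observation above, the minimal-critical-set claim also becomes immediate, since $X\setminus N(X)$ is then a critical subset of $X$. As written, however, your proposal does not establish $id(G)\ge d(G)$.
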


A bipartite graph with bipartition $A,B$ is said to be balanced if $\a A=\a B.$

\begin{lemma}\label{asd123sss}
	Let $G$ be a planar graph with bipartition $A,B$ and
	balanced. Then there exists a vertex $v\in A$ such that $\deg(v)\le3$. 
\end{lemma}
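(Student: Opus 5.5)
The plan is to count edges with Euler's formula, which bounds the number of edges of a planar triangle-free graph. We assume the bipartition is nontrivial, i.e.\ $\a A=\a B=m\ge 1$, and write $n=2m$ for the order of $G$. Since every edge of $G$ has exactly one endpoint in $A$,
\[
\a{E(G)}=\sum_{v\in A}\deg(v).
\]
Suppose, for contradiction, that $\deg(v)\ge 4$ for every $v\in A$. Then $\a{E(G)}\ge 4m=2n$.

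The key step is to show that every simple bipartite planar graph on $n\ge1$ vertices has strictly fewer than $2n$ edges. I will prove this component by component, so as not to assume connectivity. Let $C$ be a component with $n_C$ vertices and $e_C$ edges.
\begin{itemize}
\item If $n_C\le 2$, then $e_C\le n_C-1<2n_C$.
\item If $n_C\ge 3$ and $C$ is a tree, then $e_C=n_C-1<2n_C$.
\item Otherwise $C$ is connected, planar and contains a cycle. Fix a plane embedding with $f$ faces. Because $C$ is bipartite and simple, it has no cycles of length $3$, and since it is not a tree every face boundary contains a cycle, so it has length at least $4$. Double counting edge--face incidences gives $4f\le 2e_C$. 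Euler's formula $n_C-e_C+f=2$ then yields $e_C\le 2n_C-4<2n_C$.
\end{itemize}
Summing over all components gives $\a{E(G)}<2n$. This contradicts $\a{E(G)}\ge 2n$, so some $v\in A$ has $\deg(v)\le 3$.

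The only delicate point is the face-length argument in the third case. There one must note that in a connected plane graph that is not a tree, every face boundary contains a cycle, and that in a bipartite graph this cycle has length at least $4$. The per-component treatment is what makes the strict inequality $<2n$ hold without assuming $G$ is connected. Balance of the bipartition is used exactly once: it is what turns the lower bound $4\a A$ into $2n$.
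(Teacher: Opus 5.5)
Your proof is correct and follows the same route as the paper: write $|E(G)|=\sum_{v\in A}\deg(v)\ge 4|A|=2n$, then contradict the Euler-formula edge bound for bipartite plane graphs. Your component-by-component version, which handles forest components separately and needs only the strict bound $|E(G)|<2n$, is slightly more careful than the paper. The paper applies $m\le 2n-4$ to $G$ globally, which tacitly assumes $G$ is connected and that every face has length at least $4$; that bound fails, for instance, for $K_2$.
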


\begin{proof}
	Let $f$ be the number of faces of $G$, $n=\a G$, and $m=\left\Vert G\right\Vert$.
	In a planar bipartite graph, each face has length at least $4$,
	therefore $2m\ge4f$. Moreover, it is well known that planar graphs satisfy
	Euler’s formula: $n-m+f=2$. Hence $2m\ge4(2-n+m)=8-4n+4m$.
	From this it follows that $m\le2n-4$. By contradiction, suppose that $\deg(v)\ge4$
	for every vertex $v\in A$. Then the sum of degrees satisfies
	\[
	\sm_{v\in A}\deg(v)=m\ge4\a A=2n.
	\]
	\noindent This contradicts the fact that $m\le2n-4$.
\end{proof}

\begin{lemma}\label{asd123sss2}
	Let $G$ be a graph such that $G-e$ is a planar bipartite graph
	with bipartition $A,B$ satisfying $\a B+1\Le\a A\le\a B+2$, where $e\s A$.
	Then there exists a vertex $v\in A$ such that $\deg_{G}(v)\le3$. 
\end{lemma}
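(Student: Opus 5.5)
The plan is to repeat the counting argument of \cref{asd123sss}, applied to the planar bipartite graph $H:=G-e$, and then account for the single extra edge $e$. Write $e=ab$ with $a,b\in A$. Then $\deg_G(v)=\deg_H(v)$ for every $v\in A\setminus\{a,b\}$, while $\deg_G(a)=\deg_H(a)+1$ and $\deg_G(b)=\deg_H(b)+1$. Set $n=\a H=\a A+\a B$ and $m=\left\Vert H\right\Vert$.

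As in the proof of \cref{asd123sss}, Euler's formula together with the fact that every face of a planar bipartite graph has length at least $4$ gives $m\le 2n-4$. This needs $H$ to have at least three vertices, and it needs care about connectivity. I would handle both by noting that $m\le 2n-4$ holds for any simple planar bipartite graph with $n\ge 3$: apply the inequality to each component, or add edges to make the graph connected while keeping it planar and bipartite. The degenerate small cases, $n\le 2$, are impossible here, since $\a A\ge \a B+1$ and $A$ contains the two distinct endpoints of $e$.

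Now suppose, for contradiction, that $\deg_G(v)\ge 4$ for all $v\in A$. Then $\deg_H(v)\ge 4$ for $v\in A\setminus\{a,b\}$ and $\deg_H(a),\deg_H(b)\ge 3$. Since $H$ is bipartite with $A$ independent in $H$, every edge of $H$ has exactly one endpoint in $A$, so
\[
m=\sm_{v\in A}\deg_H(v)\ge 4\a A-2.
\]
Using $\a A\ge \a B+1$ gives $2\a A\ge n+1$, hence $m\ge 2(n+1)-2=2n$. This contradicts $m\le 2n-4$, so some $v\in A$ has $\deg_G(v)\le 3$.

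The argument therefore only uses the lower bound $\a A\ge\a B+1$; the upper bound $\a A\le\a B+2$ is not needed. The main obstacle is only the bookkeeping for the edge bound $m\le 2n-4$ when $H$ is disconnected or very small, and the approach above disposes of it.
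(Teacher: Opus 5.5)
Your proof is correct and follows the paper's argument: assuming $\deg_G(v)\ge 4$ for all $v\in A$ gives $\Vert G-e\Vert=\sum_{v\in A}\deg_{G-e}(v)\ge 4|A|-2\ge 2n$, which contradicts $\Vert G-e\Vert\le 2n-4$; as you note, only $|A|\ge|B|+1$ is used, and your edge-adding step to make $G-e$ connected before invoking Euler's formula is a detail the paper leaves implicit. One small slip is that $n=2$ is not excluded, since $|A|=2$, $B=\emptyset$, $G=K_2$ satisfies the hypotheses, but this does no harm: there $\Vert G-e\Vert=0=2n-4$, and in any case your contradiction hypothesis $\deg_{G-e}(a)\ge 3$ already forces $|B|\ge 3$.
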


\begin{proof}
	Let $n=\a G=\a{G-e}$, $m(G)=\left\Vert G\right\Vert$, and $m(G-e)=m(G)-1$.
	Then, as in the proof of \cref{asd123sss}, we have $m(G-e)\le2n-4$.
	By contradiction, suppose that $\deg_{G}(v)\ge4$ for every vertex
	$v\in A$. Then the sum of degrees satisfies
	\begin{eqnarray*}
		2n+2 & = & 2\a A+2\left(\a B+1\right)\\
		& \Le & 4\a A\\
		& \Le & \sm_{v\in A}\deg_{G}(v)\\
		& = & \sm_{v\in A}\deg_{G-e}(v)+2\\
		& \le & m(G-e)+2.
	\end{eqnarray*}
	\noindent That is, $2n\Le m(G-e)$. This contradicts the fact that
	$m(G-e)\le2n-4$.
\end{proof}

\begin{lemma}\label{asd123sss3}
	Let $G$ be a planar graph with bipartition $A,B$ such that
	$\a A=\a B-1$. Then there exists a vertex $v\in A$ such that $\deg(v)\le3$.
\end{lemma}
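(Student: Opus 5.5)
The plan is to repeat the Euler-formula counting argument from the proof of \cref{asd123sss}. The only change is that the size relation $\a A=\a B-1$ replaces $\a A=\a B$. Write $n=\a G$ and $m=\left\Vert G\right\Vert$. Then $n=\a A+\a B=2\a A+1$, so $\a A=(n-1)/2$. Implicitly we assume $A\neq\emptyset$, since otherwise there is nothing to find. Thus $\a A\ge1$ and $n\ge3$.

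The first step is the edge bound $m\le2n-4$ for a planar bipartite graph on $n\ge3$ vertices. In the proof of \cref{asd123sss} this came from $2m\ge4f$ together with $n-m+f=2$. I will justify it slightly more carefully, because $G$ need not be connected or contain a cycle.
\begin{itemize}
\item If $G$ has a cycle, every face boundary has length at least $4$, since bipartite graphs have no odd cycles and hence no triangles. Euler's formula for a graph with $c$ components reads $n-m+f=1+c\ge2$. Combined with $2m\ge4f$ this gives $m\le2n-4$.
\item If $G$ is a forest, then $m\le n-1\le2n-4$, which holds because $n\ge3$.
\end{itemize}

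The second step is the contradiction. Suppose $\deg(v)\ge4$ for every $v\in A$. Every edge has exactly one endpoint in $A$, so
\[
m=\sm_{v\in A}\deg(v)\ge4\a A=2(n-1)=2n-2>2n-4,
\]
which contradicts the first step. Hence some $v\in A$ has $\deg(v)\le3$.

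I expect no real obstacle. The only delicate point is the edge bound $m\le2n-4$ in degenerate situations (disconnected graphs, forests, very small $n$), which the case split above handles. The slack is actually larger than in \cref{asd123sss}: the contradiction holds with a margin of $2$ edges, because $A$ is the smaller side.
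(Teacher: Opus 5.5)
Your proposal is correct and matches the paper's proof: both derive $m\le 2n-4$ from Euler's formula and then reach a contradiction via $m=\sum_{v\in A}\deg(v)\ge 4|A|=2n-2$. Your extra care fills in details the paper leaves implicit: forests and disconnected graphs for the edge bound, and the requirement $A\neq\emptyset$, which guarantees $n\ge 3$.
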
 

\begin{proof}
	Let $n=\a G$ and $m=\left\Vert G\right\Vert$. Then, as in the
	proof of \cref{asd123sss}, we have $m\le2n-4$. By contradiction, suppose
	that $\deg(v)\ge4$ for every vertex $v\in A$. Then the sum of
	degrees satisfies
	\[
	\sm_{v\in A}\deg(v)=m\ge4\a A=2n-2.
	\]
	\noindent This contradicts the fact that $m\le2n-4$.
\end{proof}

\begin{lemma}\label{asd123sss4}
	Let $G$ be a graph such that $G-e$ is a planar bipartite graph
	with bipartition $A,B$ satisfying $\a B\Le\a A\le\a B+1$, where $e\s A$.
	Then there exists a vertex $v\in A$ such that $\deg_{G}(v)\le3$. 
\end{lemma}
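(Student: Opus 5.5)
The plan is to follow the edge-counting template of \cref{asd123sss2}, adjusting only the arithmetic to the new size range $\a B\le\a A\le\a B+1$. Set $n=\a G=\a{G-e}$, $m(G)=\left\Vert G\right\Vert$ and $m(G-e)=m(G)-1$. Since $G-e$ is planar and bipartite, every face has length at least $4$, so the Euler-formula argument of \cref{asd123sss} gives $m(G-e)\le2n-4$. (For the degenerate cases with too few edges or a disconnected graph, the bound $m\le2n-4$ only gets easier, or the conclusion is trivial; I would treat these the same way the earlier lemmas implicitly do.)

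Now argue by contradiction: suppose $\deg_G(v)\ge4$ for every $v\in A$. The edge $e$ has both endpoints in $A$ and contributes exactly $2$ to $\sum_{v\in A}\deg_G(v)$. Every other edge of $G$ joins $A$ to $B$, so
\[
\sum_{v\in A}\deg_G(v)=\sum_{v\in A}\deg_{G-e}(v)+2=m(G-e)+2.
\]
The hypothesis $\a A\ge\a B$ gives $2\a A\ge n$, hence $4\a A\ge2n$. Combining these,
\[
2n\le4\a A\le\sum_{v\in A}\deg_G(v)=m(G-e)+2\le2n-2,
\]
which is a contradiction. Note that the upper bound $\a A\le\a B+1$ is never used; only $\a A\ge\a B$ matters.

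The main point to get right is this inequality chain. Compared with \cref{asd123sss2}, the lower bound on $\a A$ is weaker by one, so the left side drops from $2n+2$ to $2n$. There is still a slack of $2$ against $m(G-e)+2\le2n-2$, so the argument goes through, and I do not expect any step to be a real obstacle.
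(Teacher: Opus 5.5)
Your proof is correct and is essentially the paper's argument: the Euler bound $m(G-e)\le 2n-4$ together with $2n\le 4|A|\le\sum_{v\in A}\deg_G(v)=m(G-e)+2$ gives the contradiction, and, as you note, the paper likewise never uses $|A|\le|B|+1$. Your worry about degenerate cases does not arise here: since $e\subseteq A$ forces $|A|\ge 2$ and hence $|B|\ge 1$, we have $n\ge 3$, so the bound $m(G-e)\le 2n-4$ for planar bipartite graphs applies.
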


\begin{proof}
	Let $n=\a G=\a{G-e}$, $m(G)=\left\Vert G\right\Vert$, and $m(G-e)=m(G)-1$.
	Then, as in the proof of \cref{asd123sss}, we have $m(G-e)\le2n-4$.
	By contradiction, suppose that $\deg_{G}(v)\ge4$ for every vertex
	$v\in A$. Then the sum of degrees satisfies
	\begin{eqnarray*}
		2n & \Le & 2\a A+2\a B\\
		& \Le & 4\a A\\
		& \Le & \sm_{v\in A}\deg_{G}(v)\\
		& = & \sm_{v\in A}\deg_{G-e}(v)+2\\
		& \le & m(G-e)+2.
	\end{eqnarray*}
	\noindent That is, $2n-2\Le m(G-e)$. This contradicts the fact that
	$m(G-e)\le2n-4$.
\end{proof}

\begin{theorem}\label{main}
	Let $G$ be a $k$-$\{1,2\}$-factor-critical graph and suppose
	that there exists an edge $e\in E(G)$ such that $G-e$ is not a
	$k$-$\{1,2\}$-factor-critical graph. Then
	\begin{itemize}
		\item If $k=0$ and $G$ is planar, then $k+1\le\delta(G)\le k+3$.
		\item If $k>0$ and $G$ is $k$-planar, then $k+1\le\delta(G)\le k+2$.
	\end{itemize}
\end{theorem}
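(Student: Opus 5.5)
The plan is to take a violating set for $G-e$, delete $k$ suitable vertices to land in a planar graph, and then apply the degree-counting lemmas above to the bipartite graph between the isolated vertices and the rest of the violating set. The lower bound $\delta(G)\ge k+1$ is \cref{obserpokasdpokm1}, so only the upper bound needs work.

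\textbf{Setting up the violating set.} Since $G-e$ is not $k$-$\{1,2\}$-factor-critical, \cref{asd12392} gives a set $S$ with $\a S\ge k$ and $i(G-e-S)>\a S-k$. Since $G$ is $k$-$\{1,2\}$-factor-critical, $i(G-S)\le\a S-k$. Deleting one edge creates at most two new isolated vertices, so $e=uw$ with $u,w\notin S$. Let $I$ be the set of isolated vertices of $G-e-S$ and $S'=S$ minus $k$ of its vertices. Then
\[
\a{S'}+1\le\a I\le\a{S'}+2,
\]
and at least one endpoint, say $u$, lies in $I$. Every $x\in I$ has $N_G(x)\subseteq S\cup(\{u,w\}\setminus\{x\})$.

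\textbf{Case $k>0$.} Instead of deleting $k$ vertices of $S$, I delete $T=T_0\cup\{u\}$, where $T_0\subseteq S$ has $\a{T_0}=k-1$ (possible since $\a S\ge k\ge1$). Because $G$ is $k$-planar, $G-T$ is planar. Put $A=I\setminus\{u\}$ and $B=S\setminus T_0$, so $\a B=\a S-k+1$. Let $H$ be the bipartite subgraph of $G-T$ consisting of all edges between $A$ and $B$; it is planar. Deleting $u$ removes $e$, so every $x\in A$ has all its $G$-neighbours in $S\cup\{u\}$ (the only way $x$ meets $u$ is if $x=w$). Hence
\[
\deg_G(x)\le\a{T_0}+\deg_H(x)+1=k+\deg_H(x).
\]
This is slightly wasteful; in fact $x$ is adjacent to $u$ only if $x=w$. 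Since $\a A=\a I-1\in\{\a B-1,\a B\}$, I apply \cref{asd123sss3} when $\a A=\a B-1$ and \cref{asd123sss} when $\a A=\a B$, obtaining $x\in A$ with $\deg_H(x)\le3$. The delicate point is the count. If $x\ne w$, then $x$ has no neighbour outside $T_0\cup B$, so $\deg_G(x)\le k-1+3=k+2$. If the lemma returns $x=w\in I$, I also add $u$ to $B$ before applying the lemma: the side sizes change by one, so I switch between \cref{asd123sss} and \cref{asd123sss3} accordingly, still giving $\deg_G\le k+2$. One must also check $A\neq\emptyset$; if $A$ is empty, then $\a I=1$, so $\a{S}=k$, and I would handle this degenerate case by bounding $\deg(u)\le k+1$ directly. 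This bookkeeping of the endpoint $w$ is the step I expect to be the main obstacle.

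\textbf{Case $k=0$.} Here $T=\emptyset$ and $G$ itself is planar, so I work directly with the bipartite graph between $I$ and $S$.
\begin{itemize}
\item[(i)] If both $u,w\in I$, the graph $H+e$ (with $H$ the $I$--$S$ bipartite subgraph) satisfies the hypotheses of \cref{asd123sss2} when $\a I\ge\a S+1$, so some $x\in I$ has $\deg_G(x)\le3$.
\item[(ii)] If only $u\in I$, then $\a I=\a S+1$. I add $w$ to the $S$-side, getting a planar bipartite graph with balanced sides (the edges at $w$ inside $I$ are only $e$). Then \cref{asd123sss} gives $x\in I$ with $\deg_G(x)\le3$.
\end{itemize}
In both cases $\delta(G)\le 3=k+3$.
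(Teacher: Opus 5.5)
\textbf{There is a gap in the step you flagged: the case where the lemma returns $x=w$.} This can only happen when both $u,w\in I$. Then $i(G-S)=|I|-2$, so both $|I|=|S|-k+2$ (i.e.\ $|A|=|B|$) and $|I|=|S|-k+1$ (i.e.\ $|A|=|B|-1$) are possible.

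In the first subcase your repair works. The only neighbour of $u$ in $A$ is $w$, so putting $u$ into $B$ just attaches a pendant vertex to $H$, and planarity survives. You need to say this explicitly, since $u$ was deleted precisely to obtain planarity. Then \cref{asd123sss3} applies with $|A|=|B\cup\{u\}|-1$.

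In the second subcase, however, $|B\cup\{u\}|=|A|+2$. Neither \cref{asd123sss} nor \cref{asd123sss3} covers these sizes, and the bare bound $m\le 2n-4=4|A|$ no longer contradicts every vertex of $A$ having degree at least $4$. Enlarging $B$ only moves you from the balanced lemma to \cref{asd123sss3}, never back. So ``switching accordingly'' is not available there.

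\textbf{The repair is to stay in $H$ and use the slack in the edge count.} If $|B|\le 2$, every $x\in A$ has $\deg_H(x)\le 2$ and there is nothing to do. If $|B|\ge 3$, then in both regimes $n(H)\le 2|A|+1$, so
$\sum_{x\in A}\deg_H(x)=m(H)\le 2n(H)-4\le 4|A|-2$.
Having $\deg_H\ge 4$ on $A\setminus\{w\}$ together with $\deg_H(w)\ge 3$ would force $m(H)\ge 4|A|-1$, which this excludes. Hence either some $x\ne w$ has $\deg_G(x)\le (k-1)+3=k+2$, or $\deg_G(w)\le (k-1)+2+1=k+2$.

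With this fix your argument is complete. For $k>0$ it is a genuinely different route from the paper's. The paper deletes only $k-1$ vertices of a violating $k$-set, applies the $2$-bicritical criterion \cref{apoksjasd} to the resulting graph, and finishes with \cref{asd123sss3} and \cref{asd123sss4}. Its assertion that this graph is planar, however, needs planarity after deleting $k-1$ vertices (the hypothesis of \cref{main2}), which $k$-planarity does not provide. Spending the $k$-th deletion on the endpoint $u$ is what lets you get by with $k$-planarity alone; the price is exactly the bookkeeping for $w$.

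Your $k=0$ case matches the paper's. The paper reaches the same two bipartite configurations via \cref{lm123lm32} and a critical independent set, rather than the isolated-vertex condition.
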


\begin{proof}
	The lower bounds in both items follow directly from
	\cref{obserpokasdpokm1}. Therefore, in what follows we focus only
	on proving the upper bounds.
	
	Let $k=0$ and suppose that $G$ has a $\{1,2\}$-factor but $G-e$
	does not. By \cref{lm123lm32} there exists a set $S^{\P}\s V(G)$ such that
	$\a{S^{\P}}>\a{N_{G-e}(S^{\P})}$, which implies that
	$d(G-e)\ge d_{G-e}(S^{\P})=\a{S^{\P}}-\a{N_{G-e}(S^{\P})}>0$.
	Then, by \cref{ijn12311}, there exists an independent set $S\s V(G)$
	of $G-e$ such that $\a S>\a{N_{G-e}(S)}$. By \cref{lm123lm32} we have
	$\a S\le\a{N_{G}(S)}$.
	Thus we conclude that
	\[
	\a{N_{G}(S)}-2\Le\a{N_{G-e}(S)}<\a S\Le\a{N_{G}(S)},
	\]
	\noindent that is, $\a{N_{G}(S)}-1\le\a S\Le\a{N_{G}(S)}$. Now
	let $e=xy$. Since $\a{N_{G-e}(S)}<\a{N_{G}(S)}$, it follows that
	at least one of the vertices $x$ or $y$ belongs to $S$.
	Without loss of generality, we consider the following two cases.
	
	$ $\\
	\textbf{Case 1.} Suppose that $x\in S$ and $y\notin S$. In this
	case, $S$ is also an independent set in $G$ and $y\notin N_{G-e}(S)$,
	since otherwise $\a{N_{G-e}(S)}=\a{N_{G}(S)}$, a contradiction.
	Hence,
	\[
	\a S\Le\a{N_{G}(S)}=\a{N_{G-e}(S)}+1\Le\a S,
	\]
	\noindent that is, $N_{G}(S)=N_{G-e}(S)\u\{y\}$ and
	$\a{N_{G}(S)}=\a S$. Therefore, the graph $H$ obtained from
	$G[S\u N_{G}(S)]$ by removing edges joining vertices of $N_{G}(S)$
	is a balanced planar bipartite graph with bipartition
	$S,N_{G}(S)$. By \cref{asd123sss}, there exists a vertex
	$v\in S$ such that $\deg_{H}(v)\le3$. Note that
	\[
	\delta(G)\Le\deg_{G}(v)=\deg_{H}(v)\Le3=k+3.
	\]
	\noindent As desired.
	
	$ $\\
	\textbf{Case 2.} Suppose that $x\in S$ and $y\in S$. In this case,
	\[
	N_{G-e}(S)\u\{x,y\}=N_{G}(S)\text{ and }\a{N_{G}(S)}=\a{N_{G-e}(S)}+2.
	\]
	\noindent Let $H$ be the graph obtained from $G[S\u N_{G}(S)]$ by
	removing edges joining vertices of $N_{G}(S)-\{x,y\}$. Then $H-e$
	is a planar bipartite graph with bipartition $S,N_{H-e}(S)$, where
	\[
	\a{N_{H-e}(S)}+1\Le\a S\Le\a{N_{H-e}(S)}+2.
	\]
	\noindent Therefore, by \cref{asd123sss2}, there exists a vertex
	$v\in S$ such that $\deg_{H}(v)\Le3$. As before, this implies that
	$\delta(G)\Le\deg_{G}(v)=\deg_{H}(v)\Le3=k+3.$
	
	$ $
	
	Now suppose that $k>0$. Since $G-e$ is not a $k$-$\{1,2\}$-factor-critical
	graph, there exists a set $S^{\P}\s V(G)$ with $\a{S^{\P}}=k$
	such that $\left(G-e\right)-S^{\P}$ has no $\{1,2\}$-factor.
	On the other hand, let $v\in S^{\P}$, let $S=S^{\P}-\{v\}$,
	and define $G^{\P}=G-S$.
	
	\begin{claim}
		$G^{\P}$ is a planar graph.
	\end{claim}
	
	\begin{claim}\label{claim1}
		$G^{\P}$ is a $1$-$\{1,2\}$-factor-critical
		graph.
	\end{claim}
	
	\begin{proof}
		Note that for every $x\in V(G-S)$, the graph
		$\left(G-S\right)-x=G-\left(S\u\{x\}\right)$
		has a $\{1,2\}$-factor, since $\a{S\u\{x\}}=k$ and $G$ is a
		$k$-$\{1,2\}$-factor-critical graph.
	\end{proof}
	
	\begin{claim}\label{claim2}
		$G^{\P}-e$ is not a $1$-$\{1,2\}$-factor-critical
		graph.
	\end{claim}
	
	\begin{proof}
		Note that $\left(G^{\P}-e\right)-v=\left(\left(G-S\right)-e\right)-v
		=\left(G-e\right)-S^{\P}$ has no $\{1,2\}$-factor.
	\end{proof}

Then, by \cref{claim2} and \cref{apoksjasd}, there exists a nonempty independent
set $S\s V(G)$ such that $\a S\ge\a{N_{G^{\P}-e}(S)}$. On the other hand, by
\cref{claim1} and \cref{apoksjasd}, we have $\a S<\a{N_{G^{\P}}(S)}$ (note that this holds
regardless of whether $S$ is independent in $G^{\P}$). Now, denoting
$e=xy$, it follows that at least one of the vertices $x$ or $y$ belongs
to $S$. Without loss of generality, we consider the following two cases.

$ $\\
\textbf{Case 1.} Suppose that $x\in S$ and $y\notin S$. In this
case, $S$ is also an independent set in $G$ and
$\a{N_{G^{\P}-e}(S)}=\a S=\a{N_{G^{\P}}(S)}-1$.
Therefore, the graph $H$ obtained from $G[S\u N_{G^{\P}}(S)]$ by
removing edges joining vertices of $N_{G^{\P}}(S)$ is bipartite
and planar with bipartition $S,N_{G^{\P}}(S)$ and
$\a S=\a{N_{G^{\P}}(S)}-1$.
By \cref{asd123sss3}, there exists a vertex $v\in S$ such that
$\deg_{H}(v)\le3$. Since $\deg_{H}(v)=\deg_{G^{\P}}(v)$, we obtain
\[
\delta(G)\Le\deg_{G^{\P}}(v)+\a S=\deg_{H}(v)+k-1\Le k+2.
\]
\noindent As desired.

$ $\\
\textbf{Case 2.} Suppose that $x\in S$ and $y\in S$. In this case,
\[
N_{G}(S)\u\{x,y\}=N_{G-e}(S)\text{ and }\a{N_{G}(S)}=\a{N_{G-e}(S)}+2.
\]

\noindent Let $H$ be the graph obtained from $G[S\u N_{G}(S)]$ by
removing edges joining vertices of $N_{G}(S)-\{x,y\}$. Then $H-e$
is a bipartite graph with bipartition $S,N_{H-e}(S)$ satisfying
\[
\a{N_{H-e}(S)}\Le\a S\le\a{N_{H-e}(S)}+1.
\]
\noindent Therefore, by \cref{asd123sss4}, there exists a vertex
$v\in S$ such that $\deg_{H}(v)\Le3$. As before, this implies that
$\delta(G)\Le\deg_{H}(v)\le k+3$.
\end{proof}

The bounds for $\delta(G)$ in \cref{main} are attainable. For
$k>0$, see \cref{1231sa113431}. The graph on the left in
\cref{1231sa113431} is a $1$-$\{1,2\}$-factor-critical graph whereas
$G-e$ is not; moreover, $\delta(G)=k+2=3$. The graph on the right is a
$1$-$\{1,2\}$-factor-critical graph whereas $G-e$ is not; moreover,
$\delta(G)=k+1=2$. Both graphs are planar, and therefore also
$1$-planar. For $k=0$, consider $K_{2}$, $K_{3}$, or the graph in
\cref{asd1w312312}; all three graphs are planar
$0$-$\{1,2\}$-factor-critical graphs with an edge whose deletion breaks
the existence of $\{1,2\}$-factors and
$\delta(G)=k+1,k+2,k+3$, respectively.

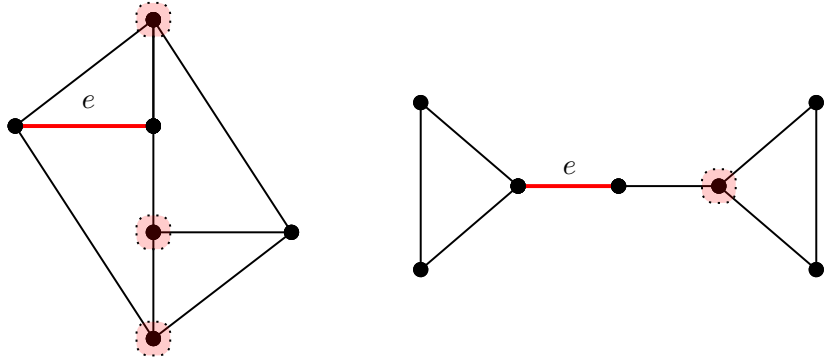
\begin{figure}[H]
	
	\begin{center}

\tikzset{every picture/.style={line width=0.75pt}} 

\begin{tikzpicture}[x=0.75pt,y=0.75pt,yscale=-1,xscale=1]
	
	\draw    (153.67,121.57) -- (153.67,175.21) ;
	\draw [shift={(153.67,175.21)}, rotate = 90] [color={rgb, 255:red, 0; green, 0; blue, 0 }  ][fill={rgb, 255:red, 0; green, 0; blue, 0 }  ][line width=0.75]      (0, 0) circle [x radius= 3.35, y radius= 3.35]   ;
	\draw [shift={(153.67,121.57)}, rotate = 90] [color={rgb, 255:red, 0; green, 0; blue, 0 }  ][fill={rgb, 255:red, 0; green, 0; blue, 0 }  ][line width=0.75]      (0, 0) circle [x radius= 3.35, y radius= 3.35]   ;
	\draw    (153.67,175.21) -- (153.67,228.86) ;
	\draw [shift={(153.67,228.86)}, rotate = 90] [color={rgb, 255:red, 0; green, 0; blue, 0 }  ][fill={rgb, 255:red, 0; green, 0; blue, 0 }  ][line width=0.75]      (0, 0) circle [x radius= 3.35, y radius= 3.35]   ;
	\draw [shift={(153.67,175.21)}, rotate = 90] [color={rgb, 255:red, 0; green, 0; blue, 0 }  ][fill={rgb, 255:red, 0; green, 0; blue, 0 }  ][line width=0.75]      (0, 0) circle [x radius= 3.35, y radius= 3.35]   ;
	\draw    (153.67,67.92) -- (153.67,121.57) ;
	\draw [shift={(153.67,121.57)}, rotate = 90] [color={rgb, 255:red, 0; green, 0; blue, 0 }  ][fill={rgb, 255:red, 0; green, 0; blue, 0 }  ][line width=0.75]      (0, 0) circle [x radius= 3.35, y radius= 3.35]   ;
	\draw [shift={(153.67,67.92)}, rotate = 90] [color={rgb, 255:red, 0; green, 0; blue, 0 }  ][fill={rgb, 255:red, 0; green, 0; blue, 0 }  ][line width=0.75]      (0, 0) circle [x radius= 3.35, y radius= 3.35]   ;
	\draw    (153.67,121.57) -- (153.67,67.92) ;
	\draw [shift={(153.67,67.92)}, rotate = 270] [color={rgb, 255:red, 0; green, 0; blue, 0 }  ][fill={rgb, 255:red, 0; green, 0; blue, 0 }  ][line width=0.75]      (0, 0) circle [x radius= 3.35, y radius= 3.35]   ;
	\draw [shift={(153.67,121.57)}, rotate = 270] [color={rgb, 255:red, 0; green, 0; blue, 0 }  ][fill={rgb, 255:red, 0; green, 0; blue, 0 }  ][line width=0.75]      (0, 0) circle [x radius= 3.35, y radius= 3.35]   ;
	\draw    (153.67,175.21) -- (223.33,175.21) ;
	\draw [shift={(223.33,175.21)}, rotate = 0] [color={rgb, 255:red, 0; green, 0; blue, 0 }  ][fill={rgb, 255:red, 0; green, 0; blue, 0 }  ][line width=0.75]      (0, 0) circle [x radius= 3.35, y radius= 3.35]   ;
	\draw [shift={(153.67,175.21)}, rotate = 0] [color={rgb, 255:red, 0; green, 0; blue, 0 }  ][fill={rgb, 255:red, 0; green, 0; blue, 0 }  ][line width=0.75]      (0, 0) circle [x radius= 3.35, y radius= 3.35]   ;
	\draw    (153.67,121.57) ;
	\draw [shift={(153.67,121.57)}, rotate = 0] [color={rgb, 255:red, 0; green, 0; blue, 0 }  ][fill={rgb, 255:red, 0; green, 0; blue, 0 }  ][line width=0.75]      (0, 0) circle [x radius= 3.35, y radius= 3.35]   ;
	\draw [shift={(153.67,121.57)}, rotate = 0] [color={rgb, 255:red, 0; green, 0; blue, 0 }  ][fill={rgb, 255:red, 0; green, 0; blue, 0 }  ][line width=0.75]      (0, 0) circle [x radius= 3.35, y radius= 3.35]   ;
	\draw    (153.67,228.86) -- (223.33,175.21) ;
	\draw [shift={(223.33,175.21)}, rotate = 322.4] [color={rgb, 255:red, 0; green, 0; blue, 0 }  ][fill={rgb, 255:red, 0; green, 0; blue, 0 }  ][line width=0.75]      (0, 0) circle [x radius= 3.35, y radius= 3.35]   ;
	\draw [shift={(153.67,228.86)}, rotate = 322.4] [color={rgb, 255:red, 0; green, 0; blue, 0 }  ][fill={rgb, 255:red, 0; green, 0; blue, 0 }  ][line width=0.75]      (0, 0) circle [x radius= 3.35, y radius= 3.35]   ;
	\draw    (153.67,67.92) -- (223.33,175.21) ;
	\draw [shift={(223.33,175.21)}, rotate = 57] [color={rgb, 255:red, 0; green, 0; blue, 0 }  ][fill={rgb, 255:red, 0; green, 0; blue, 0 }  ][line width=0.75]      (0, 0) circle [x radius= 3.35, y radius= 3.35]   ;
	\draw [shift={(153.67,67.92)}, rotate = 57] [color={rgb, 255:red, 0; green, 0; blue, 0 }  ][fill={rgb, 255:red, 0; green, 0; blue, 0 }  ][line width=0.75]      (0, 0) circle [x radius= 3.35, y radius= 3.35]   ;
	\draw    (84,121.57) -- (153.67,228.86) ;
	\draw [shift={(153.67,228.86)}, rotate = 57] [color={rgb, 255:red, 0; green, 0; blue, 0 }  ][fill={rgb, 255:red, 0; green, 0; blue, 0 }  ][line width=0.75]      (0, 0) circle [x radius= 3.35, y radius= 3.35]   ;
	\draw [shift={(84,121.57)}, rotate = 57] [color={rgb, 255:red, 0; green, 0; blue, 0 }  ][fill={rgb, 255:red, 0; green, 0; blue, 0 }  ][line width=0.75]      (0, 0) circle [x radius= 3.35, y radius= 3.35]   ;
	\draw    (84,121.57) -- (153.67,67.92) ;
	\draw [shift={(153.67,67.92)}, rotate = 322.4] [color={rgb, 255:red, 0; green, 0; blue, 0 }  ][fill={rgb, 255:red, 0; green, 0; blue, 0 }  ][line width=0.75]      (0, 0) circle [x radius= 3.35, y radius= 3.35]   ;
	\draw [shift={(84,121.57)}, rotate = 322.4] [color={rgb, 255:red, 0; green, 0; blue, 0 }  ][fill={rgb, 255:red, 0; green, 0; blue, 0 }  ][line width=0.75]      (0, 0) circle [x radius= 3.35, y radius= 3.35]   ;
	\draw [color={rgb, 255:red, 255; green, 0; blue, 0 }  ,draw opacity=1 ][line width=1.5]    (84,121.57) -- (153.67,121.57) ;
	\draw    (84,121.57) ;
	\draw [shift={(84,121.57)}, rotate = 0] [color={rgb, 255:red, 0; green, 0; blue, 0 }  ][fill={rgb, 255:red, 0; green, 0; blue, 0 }  ][line width=0.75]      (0, 0) circle [x radius= 3.35, y radius= 3.35]   ;
	\draw [shift={(84,121.57)}, rotate = 0] [color={rgb, 255:red, 0; green, 0; blue, 0 }  ][fill={rgb, 255:red, 0; green, 0; blue, 0 }  ][line width=0.75]      (0, 0) circle [x radius= 3.35, y radius= 3.35]   ;
	\draw    (153.67,67.92) ;
	\draw [shift={(153.67,67.92)}, rotate = 0] [color={rgb, 255:red, 0; green, 0; blue, 0 }  ][fill={rgb, 255:red, 0; green, 0; blue, 0 }  ][line width=0.75]      (0, 0) circle [x radius= 3.35, y radius= 3.35]   ;
	\draw [shift={(153.67,67.92)}, rotate = 0] [color={rgb, 255:red, 0; green, 0; blue, 0 }  ][fill={rgb, 255:red, 0; green, 0; blue, 0 }  ][line width=0.75]      (0, 0) circle [x radius= 3.35, y radius= 3.35]   ;
	\draw  [fill={rgb, 255:red, 255; green, 0; blue, 0 }  ,fill opacity=0.2 ][dash pattern={on 0.84pt off 2.51pt}] (145.17,172.71) .. controls (145.17,169.4) and (147.85,166.71) .. (151.17,166.71) -- (156.17,166.71) .. controls (159.48,166.71) and (162.17,169.4) .. (162.17,172.71) -- (162.17,177.71) .. controls (162.17,181.03) and (159.48,183.71) .. (156.17,183.71) -- (151.17,183.71) .. controls (147.85,183.71) and (145.17,181.03) .. (145.17,177.71) -- cycle ;
	\draw  [fill={rgb, 255:red, 255; green, 0; blue, 0 }  ,fill opacity=0.2 ][dash pattern={on 0.84pt off 2.51pt}] (145.17,65.42) .. controls (145.17,62.11) and (147.85,59.42) .. (151.17,59.42) -- (156.17,59.42) .. controls (159.48,59.42) and (162.17,62.11) .. (162.17,65.42) -- (162.17,70.42) .. controls (162.17,73.74) and (159.48,76.42) .. (156.17,76.42) -- (151.17,76.42) .. controls (147.85,76.42) and (145.17,73.74) .. (145.17,70.42) -- cycle ;
	\draw    (288.5,194.03) -- (288.5,109.73) ;
	\draw [shift={(288.5,109.73)}, rotate = 270] [color={rgb, 255:red, 0; green, 0; blue, 0 }  ][fill={rgb, 255:red, 0; green, 0; blue, 0 }  ][line width=0.75]      (0, 0) circle [x radius= 3.35, y radius= 3.35]   ;
	\draw [shift={(288.5,194.03)}, rotate = 270] [color={rgb, 255:red, 0; green, 0; blue, 0 }  ][fill={rgb, 255:red, 0; green, 0; blue, 0 }  ][line width=0.75]      (0, 0) circle [x radius= 3.35, y radius= 3.35]   ;
	\draw    (288.5,109.73) -- (337.63,151.88) ;
	\draw [shift={(337.63,151.88)}, rotate = 40.63] [color={rgb, 255:red, 0; green, 0; blue, 0 }  ][fill={rgb, 255:red, 0; green, 0; blue, 0 }  ][line width=0.75]      (0, 0) circle [x radius= 3.35, y radius= 3.35]   ;
	\draw [shift={(288.5,109.73)}, rotate = 40.63] [color={rgb, 255:red, 0; green, 0; blue, 0 }  ][fill={rgb, 255:red, 0; green, 0; blue, 0 }  ][line width=0.75]      (0, 0) circle [x radius= 3.35, y radius= 3.35]   ;
	\draw    (337.63,151.88) -- (288.5,194.03) ;
	\draw [shift={(288.5,194.03)}, rotate = 139.37] [color={rgb, 255:red, 0; green, 0; blue, 0 }  ][fill={rgb, 255:red, 0; green, 0; blue, 0 }  ][line width=0.75]      (0, 0) circle [x radius= 3.35, y radius= 3.35]   ;
	\draw [shift={(337.63,151.88)}, rotate = 139.37] [color={rgb, 255:red, 0; green, 0; blue, 0 }  ][fill={rgb, 255:red, 0; green, 0; blue, 0 }  ][line width=0.75]      (0, 0) circle [x radius= 3.35, y radius= 3.35]   ;
	\draw    (388.29,151.88) -- (337.63,151.88) ;
	\draw [shift={(337.63,151.88)}, rotate = 180] [color={rgb, 255:red, 0; green, 0; blue, 0 }  ][fill={rgb, 255:red, 0; green, 0; blue, 0 }  ][line width=0.75]      (0, 0) circle [x radius= 3.35, y radius= 3.35]   ;
	\draw [shift={(388.29,151.88)}, rotate = 180] [color={rgb, 255:red, 0; green, 0; blue, 0 }  ][fill={rgb, 255:red, 0; green, 0; blue, 0 }  ][line width=0.75]      (0, 0) circle [x radius= 3.35, y radius= 3.35]   ;
	\draw    (438.87,151.88) -- (388.29,151.88) ;
	\draw [shift={(388.29,151.88)}, rotate = 180] [color={rgb, 255:red, 0; green, 0; blue, 0 }  ][fill={rgb, 255:red, 0; green, 0; blue, 0 }  ][line width=0.75]      (0, 0) circle [x radius= 3.35, y radius= 3.35]   ;
	\draw [shift={(438.87,151.88)}, rotate = 180] [color={rgb, 255:red, 0; green, 0; blue, 0 }  ][fill={rgb, 255:red, 0; green, 0; blue, 0 }  ][line width=0.75]      (0, 0) circle [x radius= 3.35, y radius= 3.35]   ;
	\draw    (488,109.73) -- (488,194.03) ;
	\draw [shift={(488,194.03)}, rotate = 90] [color={rgb, 255:red, 0; green, 0; blue, 0 }  ][fill={rgb, 255:red, 0; green, 0; blue, 0 }  ][line width=0.75]      (0, 0) circle [x radius= 3.35, y radius= 3.35]   ;
	\draw [shift={(488,109.73)}, rotate = 90] [color={rgb, 255:red, 0; green, 0; blue, 0 }  ][fill={rgb, 255:red, 0; green, 0; blue, 0 }  ][line width=0.75]      (0, 0) circle [x radius= 3.35, y radius= 3.35]   ;
	\draw    (488,194.03) -- (438.87,151.88) ;
	\draw [shift={(438.87,151.88)}, rotate = 220.63] [color={rgb, 255:red, 0; green, 0; blue, 0 }  ][fill={rgb, 255:red, 0; green, 0; blue, 0 }  ][line width=0.75]      (0, 0) circle [x radius= 3.35, y radius= 3.35]   ;
	\draw [shift={(488,194.03)}, rotate = 220.63] [color={rgb, 255:red, 0; green, 0; blue, 0 }  ][fill={rgb, 255:red, 0; green, 0; blue, 0 }  ][line width=0.75]      (0, 0) circle [x radius= 3.35, y radius= 3.35]   ;
	\draw    (438.87,151.88) -- (488,109.73) ;
	\draw [shift={(488,109.73)}, rotate = 319.37] [color={rgb, 255:red, 0; green, 0; blue, 0 }  ][fill={rgb, 255:red, 0; green, 0; blue, 0 }  ][line width=0.75]      (0, 0) circle [x radius= 3.35, y radius= 3.35]   ;
	\draw [shift={(438.87,151.88)}, rotate = 319.37] [color={rgb, 255:red, 0; green, 0; blue, 0 }  ][fill={rgb, 255:red, 0; green, 0; blue, 0 }  ][line width=0.75]      (0, 0) circle [x radius= 3.35, y radius= 3.35]   ;
	\draw [color={rgb, 255:red, 255; green, 0; blue, 0 }  ,draw opacity=1 ][line width=1.5]    (337.63,151.88) -- (390.3,151.88) ;
	\draw    (337.63,151.88) ;
	\draw [shift={(337.63,151.88)}, rotate = 0] [color={rgb, 255:red, 0; green, 0; blue, 0 }  ][fill={rgb, 255:red, 0; green, 0; blue, 0 }  ][line width=0.75]      (0, 0) circle [x radius= 3.35, y radius= 3.35]   ;
	\draw [shift={(337.63,151.88)}, rotate = 0] [color={rgb, 255:red, 0; green, 0; blue, 0 }  ][fill={rgb, 255:red, 0; green, 0; blue, 0 }  ][line width=0.75]      (0, 0) circle [x radius= 3.35, y radius= 3.35]   ;
	\draw    (388.29,151.88) ;
	\draw [shift={(388.29,151.88)}, rotate = 0] [color={rgb, 255:red, 0; green, 0; blue, 0 }  ][fill={rgb, 255:red, 0; green, 0; blue, 0 }  ][line width=0.75]      (0, 0) circle [x radius= 3.35, y radius= 3.35]   ;
	\draw [shift={(388.29,151.88)}, rotate = 0] [color={rgb, 255:red, 0; green, 0; blue, 0 }  ][fill={rgb, 255:red, 0; green, 0; blue, 0 }  ][line width=0.75]      (0, 0) circle [x radius= 3.35, y radius= 3.35]   ;
	\draw  [fill={rgb, 255:red, 255; green, 0; blue, 0 }  ,fill opacity=0.2 ][dash pattern={on 0.84pt off 2.51pt}] (430.37,149.38) .. controls (430.37,146.07) and (433.06,143.38) .. (436.37,143.38) -- (441.37,143.38) .. controls (444.68,143.38) and (447.37,146.07) .. (447.37,149.38) -- (447.37,154.38) .. controls (447.37,157.69) and (444.68,160.38) .. (441.37,160.38) -- (436.37,160.38) .. controls (433.06,160.38) and (430.37,157.69) .. (430.37,154.38) -- cycle ;
	\draw    (153.67,121.57) ;
	\draw [shift={(153.67,121.57)}, rotate = 0] [color={rgb, 255:red, 0; green, 0; blue, 0 }  ][fill={rgb, 255:red, 0; green, 0; blue, 0 }  ][line width=0.75]      (0, 0) circle [x radius= 3.35, y radius= 3.35]   ;
	\draw [shift={(153.67,121.57)}, rotate = 0] [color={rgb, 255:red, 0; green, 0; blue, 0 }  ][fill={rgb, 255:red, 0; green, 0; blue, 0 }  ][line width=0.75]      (0, 0) circle [x radius= 3.35, y radius= 3.35]   ;
	\draw  [fill={rgb, 255:red, 255; green, 0; blue, 0 }  ,fill opacity=0.2 ][dash pattern={on 0.84pt off 2.51pt}] (145.17,226.36) .. controls (145.17,223.05) and (147.85,220.36) .. (151.17,220.36) -- (156.17,220.36) .. controls (159.48,220.36) and (162.17,223.05) .. (162.17,226.36) -- (162.17,231.36) .. controls (162.17,234.67) and (159.48,237.36) .. (156.17,237.36) -- (151.17,237.36) .. controls (147.85,237.36) and (145.17,234.67) .. (145.17,231.36) -- cycle ;
	
	\draw (116.02,104.8) node [anchor=north west][inner sep=0.75pt]  [font=\small]  {$e$};
	\draw (358.75,137.87) node [anchor=north west][inner sep=0.75pt]  [font=\small]  {$e$};

\end{tikzpicture}

	\end{center}

	\caption{Planar $k$-$\{1,2\}$-factor-critical graphs with $k=1$. In each example,
		$G-e$ is not a $k$-$\{1,2\}$-factor-critical graph and $\delta(G)=k+2=3$
		for the graph on the left, while $\delta(G)=k+1=2$ for the graph on the
		right. The highlighted vertices are sets $S\s V(G)$ satisfying
		$i(G-e-S)>\a S-k$, which verifies that $G-e$ is not a
		$k$-$\{1,2\}$-factor-critical graph by \cref{asd12392}.}
	
	\label{1231sa113431}
	
\end{figure}

\begin{figure}[H]
	
	\begin{center}

		\tikzset{every picture/.style={line width=0.75pt}} 
		
		\begin{tikzpicture}[x=0.75pt,y=0.75pt,yscale=-1,xscale=1]
			
			\draw    (171,136.84) -- (208.86,64.35) ;
			\draw [shift={(208.86,64.35)}, rotate = 297.57] [color={rgb, 255:red, 0; green, 0; blue, 0 }  ][fill={rgb, 255:red, 0; green, 0; blue, 0 }  ][line width=0.75]      (0, 0) circle [x radius= 3.35, y radius= 3.35]   ;
			\draw [shift={(171,136.84)}, rotate = 297.57] [color={rgb, 255:red, 0; green, 0; blue, 0 }  ][fill={rgb, 255:red, 0; green, 0; blue, 0 }  ][line width=0.75]      (0, 0) circle [x radius= 3.35, y radius= 3.35]   ;
			\draw    (208.86,64.35) -- (208.86,136.84) ;
			\draw [shift={(208.86,136.84)}, rotate = 90] [color={rgb, 255:red, 0; green, 0; blue, 0 }  ][fill={rgb, 255:red, 0; green, 0; blue, 0 }  ][line width=0.75]      (0, 0) circle [x radius= 3.35, y radius= 3.35]   ;
			\draw [shift={(208.86,64.35)}, rotate = 90] [color={rgb, 255:red, 0; green, 0; blue, 0 }  ][fill={rgb, 255:red, 0; green, 0; blue, 0 }  ][line width=0.75]      (0, 0) circle [x radius= 3.35, y radius= 3.35]   ;
			\draw    (208.86,136.84) -- (208.86,209.33) ;
			\draw [shift={(208.86,209.33)}, rotate = 90] [color={rgb, 255:red, 0; green, 0; blue, 0 }  ][fill={rgb, 255:red, 0; green, 0; blue, 0 }  ][line width=0.75]      (0, 0) circle [x radius= 3.35, y radius= 3.35]   ;
			\draw [shift={(208.86,136.84)}, rotate = 90] [color={rgb, 255:red, 0; green, 0; blue, 0 }  ][fill={rgb, 255:red, 0; green, 0; blue, 0 }  ][line width=0.75]      (0, 0) circle [x radius= 3.35, y radius= 3.35]   ;
			\draw    (252.35,140.86) -- (208.86,209.33) ;
			\draw [shift={(208.86,209.33)}, rotate = 122.43] [color={rgb, 255:red, 0; green, 0; blue, 0 }  ][fill={rgb, 255:red, 0; green, 0; blue, 0 }  ][line width=0.75]      (0, 0) circle [x radius= 3.35, y radius= 3.35]   ;
			\draw [shift={(252.35,140.86)}, rotate = 122.43] [color={rgb, 255:red, 0; green, 0; blue, 0 }  ][fill={rgb, 255:red, 0; green, 0; blue, 0 }  ][line width=0.75]      (0, 0) circle [x radius= 3.35, y radius= 3.35]   ;
			\draw    (252.35,140.86) -- (208.86,64.35) ;
			\draw [shift={(208.86,64.35)}, rotate = 240.39] [color={rgb, 255:red, 0; green, 0; blue, 0 }  ][fill={rgb, 255:red, 0; green, 0; blue, 0 }  ][line width=0.75]      (0, 0) circle [x radius= 3.35, y radius= 3.35]   ;
			\draw [shift={(252.35,140.86)}, rotate = 240.39] [color={rgb, 255:red, 0; green, 0; blue, 0 }  ][fill={rgb, 255:red, 0; green, 0; blue, 0 }  ][line width=0.75]      (0, 0) circle [x radius= 3.35, y radius= 3.35]   ;
			\draw    (295.04,156.97) -- (252.35,140.86) ;
			\draw [shift={(252.35,140.86)}, rotate = 200.67] [color={rgb, 255:red, 0; green, 0; blue, 0 }  ][fill={rgb, 255:red, 0; green, 0; blue, 0 }  ][line width=0.75]      (0, 0) circle [x radius= 3.35, y radius= 3.35]   ;
			\draw [shift={(295.04,156.97)}, rotate = 200.67] [color={rgb, 255:red, 0; green, 0; blue, 0 }  ][fill={rgb, 255:red, 0; green, 0; blue, 0 }  ][line width=0.75]      (0, 0) circle [x radius= 3.35, y radius= 3.35]   ;
			\draw    (208.86,209.33) -- (295.04,156.97) ;
			\draw [shift={(295.04,156.97)}, rotate = 328.72] [color={rgb, 255:red, 0; green, 0; blue, 0 }  ][fill={rgb, 255:red, 0; green, 0; blue, 0 }  ][line width=0.75]      (0, 0) circle [x radius= 3.35, y radius= 3.35]   ;
			\draw [shift={(208.86,209.33)}, rotate = 328.72] [color={rgb, 255:red, 0; green, 0; blue, 0 }  ][fill={rgb, 255:red, 0; green, 0; blue, 0 }  ][line width=0.75]      (0, 0) circle [x radius= 3.35, y radius= 3.35]   ;
			\draw    (208.86,209.33) -- (378,172.28) ;
			\draw [shift={(378,172.28)}, rotate = 347.64] [color={rgb, 255:red, 0; green, 0; blue, 0 }  ][fill={rgb, 255:red, 0; green, 0; blue, 0 }  ][line width=0.75]      (0, 0) circle [x radius= 3.35, y radius= 3.35]   ;
			\draw [shift={(208.86,209.33)}, rotate = 347.64] [color={rgb, 255:red, 0; green, 0; blue, 0 }  ][fill={rgb, 255:red, 0; green, 0; blue, 0 }  ][line width=0.75]      (0, 0) circle [x radius= 3.35, y radius= 3.35]   ;
			\draw    (208.86,64.35) -- (378,172.28) ;
			\draw [shift={(378,172.28)}, rotate = 32.54] [color={rgb, 255:red, 0; green, 0; blue, 0 }  ][fill={rgb, 255:red, 0; green, 0; blue, 0 }  ][line width=0.75]      (0, 0) circle [x radius= 3.35, y radius= 3.35]   ;
			\draw [shift={(208.86,64.35)}, rotate = 32.54] [color={rgb, 255:red, 0; green, 0; blue, 0 }  ][fill={rgb, 255:red, 0; green, 0; blue, 0 }  ][line width=0.75]      (0, 0) circle [x radius= 3.35, y radius= 3.35]   ;
			\draw    (295.04,156.97) -- (378,172.28) ;
			\draw [shift={(378,172.28)}, rotate = 10.45] [color={rgb, 255:red, 0; green, 0; blue, 0 }  ][fill={rgb, 255:red, 0; green, 0; blue, 0 }  ][line width=0.75]      (0, 0) circle [x radius= 3.35, y radius= 3.35]   ;
			\draw [shift={(295.04,156.97)}, rotate = 10.45] [color={rgb, 255:red, 0; green, 0; blue, 0 }  ][fill={rgb, 255:red, 0; green, 0; blue, 0 }  ][line width=0.75]      (0, 0) circle [x radius= 3.35, y radius= 3.35]   ;
			\draw    (171,136.84) -- (208.86,209.33) ;
			\draw [shift={(208.86,209.33)}, rotate = 62.43] [color={rgb, 255:red, 0; green, 0; blue, 0 }  ][fill={rgb, 255:red, 0; green, 0; blue, 0 }  ][line width=0.75]      (0, 0) circle [x radius= 3.35, y radius= 3.35]   ;
			\draw [shift={(171,136.84)}, rotate = 62.43] [color={rgb, 255:red, 0; green, 0; blue, 0 }  ][fill={rgb, 255:red, 0; green, 0; blue, 0 }  ][line width=0.75]      (0, 0) circle [x radius= 3.35, y radius= 3.35]   ;
			\draw    (208.86,136.84) -- (171,136.84) ;
			\draw [shift={(171,136.84)}, rotate = 180] [color={rgb, 255:red, 0; green, 0; blue, 0 }  ][fill={rgb, 255:red, 0; green, 0; blue, 0 }  ][line width=0.75]      (0, 0) circle [x radius= 3.35, y radius= 3.35]   ;
			\draw [shift={(208.86,136.84)}, rotate = 180] [color={rgb, 255:red, 0; green, 0; blue, 0 }  ][fill={rgb, 255:red, 0; green, 0; blue, 0 }  ][line width=0.75]      (0, 0) circle [x radius= 3.35, y radius= 3.35]   ;
			\draw [color={rgb, 255:red, 255; green, 0; blue, 0 }  ,draw opacity=1 ][line width=1.5]    (171,136.84) -- (208.86,136.84) ;
			\draw  [fill={rgb, 255:red, 255; green, 0; blue, 0 }  ,fill opacity=0.2 ][dash pattern={on 0.84pt off 2.51pt}] (202.01,63.5) .. controls (202.01,60.19) and (204.7,57.5) .. (208.01,57.5) -- (209.7,57.5) .. controls (213.02,57.5) and (215.7,60.19) .. (215.7,63.5) -- (215.7,65.19) .. controls (215.7,68.51) and (213.02,71.19) .. (209.7,71.19) -- (208.01,71.19) .. controls (204.7,71.19) and (202.01,68.51) .. (202.01,65.19) -- cycle ;
			\draw  [fill={rgb, 255:red, 255; green, 0; blue, 0 }  ,fill opacity=0.2 ][dash pattern={on 0.84pt off 2.51pt}] (202.01,208.48) .. controls (202.01,205.17) and (204.7,202.48) .. (208.01,202.48) -- (209.7,202.48) .. controls (213.02,202.48) and (215.7,205.17) .. (215.7,208.48) -- (215.7,210.17) .. controls (215.7,213.49) and (213.02,216.17) .. (209.7,216.17) -- (208.01,216.17) .. controls (204.7,216.17) and (202.01,213.49) .. (202.01,210.17) -- cycle ;
			\draw  [fill={rgb, 255:red, 255; green, 0; blue, 0 }  ,fill opacity=0.2 ][dash pattern={on 0.84pt off 2.51pt}] (288.19,156.13) .. controls (288.19,152.81) and (290.88,150.13) .. (294.19,150.13) -- (295.89,150.13) .. controls (299.2,150.13) and (301.89,152.81) .. (301.89,156.13) -- (301.89,157.82) .. controls (301.89,161.13) and (299.2,163.82) .. (295.89,163.82) -- (294.19,163.82) .. controls (290.88,163.82) and (288.19,161.13) .. (288.19,157.82) -- cycle ;
			\draw    (208.86,136.84) ;
			\draw [shift={(208.86,136.84)}, rotate = 0] [color={rgb, 255:red, 0; green, 0; blue, 0 }  ][fill={rgb, 255:red, 0; green, 0; blue, 0 }  ][line width=0.75]      (0, 0) circle [x radius= 3.35, y radius= 3.35]   ;
			\draw [shift={(208.86,136.84)}, rotate = 0] [color={rgb, 255:red, 0; green, 0; blue, 0 }  ][fill={rgb, 255:red, 0; green, 0; blue, 0 }  ][line width=0.75]      (0, 0) circle [x radius= 3.35, y radius= 3.35]   ;
			\draw    (171,136.84) ;
			\draw [shift={(171,136.84)}, rotate = 0] [color={rgb, 255:red, 0; green, 0; blue, 0 }  ][fill={rgb, 255:red, 0; green, 0; blue, 0 }  ][line width=0.75]      (0, 0) circle [x radius= 3.35, y radius= 3.35]   ;
			\draw [shift={(171,136.84)}, rotate = 0] [color={rgb, 255:red, 0; green, 0; blue, 0 }  ][fill={rgb, 255:red, 0; green, 0; blue, 0 }  ][line width=0.75]      (0, 0) circle [x radius= 3.35, y radius= 3.35]   ;
			
			\draw (187.74,126.36) node [anchor=north west][inner sep=0.75pt]  [font=\small]  {$e$};

		\end{tikzpicture}

	\end{center}

	\caption{Planar $0$-$\{1,2\}$-factor-critical graph with minimum degree
		$\delta(G)=0+3=3$. Moreover, $G-e$ is not a
		$0$-$\{1,2\}$-factor-critical graph, which can be verified using the
		highlighted set via \cref{asd12392}.}
	
	\label{asd1w312312}
	
\end{figure}
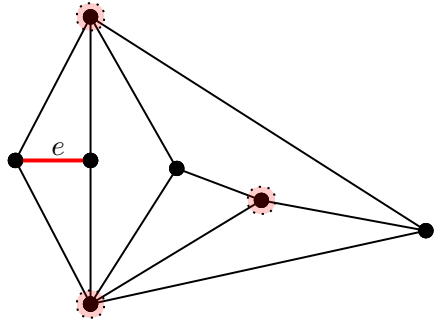

From \cref{main}, the following two results are obtained directly.

\begin{theorem}\label{main2}
	Let $G$ be a minimal $k$-$\{1,2\}$-factor-critical
	graph such that for every $S\s V(G)$ with $\a S=k-1$ the graph $G-S$
	is planar. Then $k+1\le\delta(G)\le k+2$. 
\end{theorem}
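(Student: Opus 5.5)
The plan is to reduce \cref{main2} directly to \cref{main}, handling $k=0$ separately since the hypothesis is vacuous there. The lower bound $k+1\le\delta(G)$ holds for every $k$-$\{1,2\}$-factor-critical graph by \cref{obserpokasdpokm1}. So only the upper bound $\delta(G)\le k+2$ needs proof.

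\textbf{Case $k=0$.} There are no sets $S$ with $\a S=-1$, so the planarity hypothesis gives no information. Here I will not use \cref{main}, whose $k=0$ item only gives $k+3$ and in any case needs $G$ planar. Instead I invoke \cref{asd12300}, which covers $k=0$ for arbitrary minimal $0$-$\{1,2\}$-factor-critical graphs and gives $1\le\delta(G)\le 2$ directly.

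\textbf{Case $k\ge1$.} First, $G$ is $k$-planar. Take any $T\s V(G)$ with $\a T=k$, pick $u\in T$, and set $S=T-\{u\}$, so $\a S=k-1$. By hypothesis $G-S$ is planar, and $G-T=(G-S)-u$ is a subgraph of it. Planarity is closed under vertex deletion, so $G-T$ is planar.

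Second, we need an edge $e$ with $G-e$ not $k$-$\{1,2\}$-factor-critical. Since $k<n$ and $\delta(G)\ge k+1\ge 1$, the graph $G$ has at least one edge $e$. Minimality of $G$ says exactly that $G-e$ is not $k$-$\{1,2\}$-factor-critical.

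All hypotheses of the second item of \cref{main} now hold, so $\delta(G)\le k+2$.

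The only step needing care is the reduction from the $(k-1)$-deletion hypothesis to $k$-planarity, which is just heredity of planarity under subgraphs. The proof of \cref{main} in fact only uses planarity of $G^{\P}=G-S$ with $\a S=k-1$, so the hypothesis of \cref{main2} is precisely what that argument consumes. If one distrusts the $k$-planarity reduction, one can rerun that argument verbatim with the given planarity of $G-S$.
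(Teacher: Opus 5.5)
Your proof is correct and follows the paper's route. Like the paper, you reduce $k\ge1$ to the second item of \cref{main} and treat $k=0$ separately. You also make explicit what the paper leaves implicit: the heredity step giving $k$-planarity, the existence of an edge $e$ with $G-e$ not $k$-$\{1,2\}$-factor-critical, and your apt remark that the proof of \cref{main} really uses planarity of $G-S$ with $|S|=k-1$, which is exactly this theorem's hypothesis. The only difference is the case $k=0$: you cite \cref{asd12300}, whereas the paper argues directly that a minimal $0$-$\{1,2\}$-factor-critical graph equals any of its $\{1,2\}$-factors, since an edge outside a fixed factor could be deleted without destroying that factor. Hence every degree is $1$ or $2$.
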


\begin{proof}
	By \cref{main}, it suffices to prove that every minimal
	$0$-$\{1,2\}$-factor-critical graph $G$ satisfies
	$1\le\delta(G)\le2$. But this is trivially true, since $G$
	itself is a $\{1,2\}$-factor in this case.
\end{proof}

The bounds in \cref{main2} are tight. Note that $K_{5}$ is a
minimal $2$-$\{1,2\}$-factor-critical graph for which $K_{5}-v$ is planar
for every $v\in V(K_{5})$, while $\delta(K_{5})=4=k+2$ with $k=2$.
On the other hand, the graph in \cref{1231ss12} is a $1$-planar graph,
and \cref{12312s13245fff} shows that it is a
$2$-$\{1,2\}$-factor-critical graph. Moreover, the minimality of this
graph as a $2$-$\{1,2\}$-factor-critical graph follows immediately:
removing an edge different from $e$ produces vertices of degree $2$,
yielding a graph that is not a $2$-$\{1,2\}$-factor-critical graph,
as ensured by \cref{obserpokasdpokm1}. On the other hand, $G-e$ is also
not a $2$-$\{1,2\}$-factor-critical graph, since if $S$ denotes the set
of red vertices in \cref{1231ss12}, then
$i(G-e-S)=2>\a S-2=1$, and hence $G-e$ is not
$2$-$\{1,2\}$-factor-critical by \cref{asd12392}. Finally, note that
$\delta(G)=k+1=3$.

\begin{figure}[H]
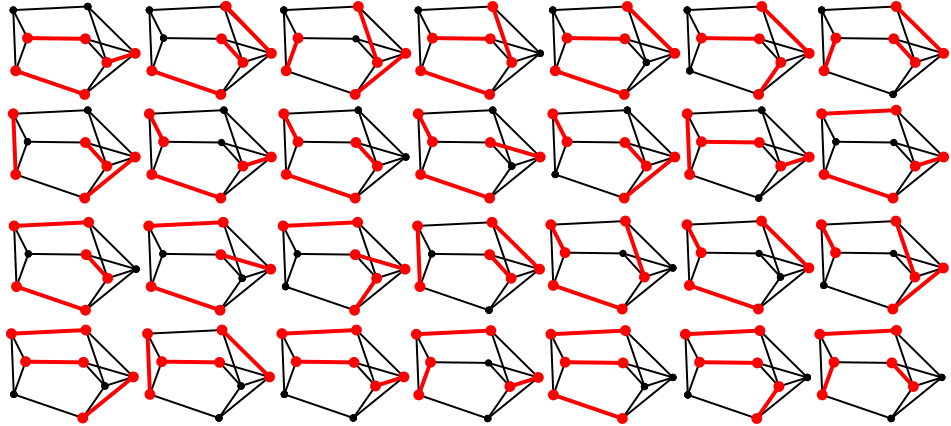

	
	\begin{center}

		\tikzset{every picture/.style={line width=0.75pt}} 
		


	\end{center}

	\caption{The graph in \cref{1231ss12}, \cref{1231szzzs12} is a
		$2$-$\{1,2\}$-factor-critical
		($2$-factor-critical) graph.}
	
	\label{12312s13245fff}
	
\end{figure}

\begin{theorem}\label{main3}
	Let $G$ be a minimal $k$-$\{1,2\}$-factor-critical
	planar graph. Then $k+1\le\delta(G)\le k+2$. 
\end{theorem}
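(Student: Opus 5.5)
We derive \cref{main3} from \cref{main2}, handling the small values of $k$ separately. The main observation is that planarity is inherited by subgraphs. If $G$ is planar, then $G-S$ is planar for every $S\s V(G)$, since deleting vertices from a plane drawing leaves a plane drawing. So for $k\ge1$, a planar $G$ satisfies the hypothesis of \cref{main2}: $G-S$ is planar for every $S$ with $\a S=k-1$.

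Let $G$ be a minimal $k$-$\{1,2\}$-factor-critical planar graph. The lower bound $\delta(G)\ge k+1$ is \cref{obserpokasdpokm1}. For the upper bound we split into two cases.

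\textbf{Case $k\ge1$.} By the remark above, $G$ is planar after deleting any $k-1$ vertices. Minimality gives an edge $e$ such that $G-e$ is not $k$-$\{1,2\}$-factor-critical (any edge works, provided $G$ has one). We may assume $E(G)\neq\emptyset$: otherwise $\delta(G)=0<k+1$, contradicting the lower bound. Now the second item of \cref{main} applies to $G$, since $G$ is $(k-1)$-vertex-deletion planar in the sense used in its proof: that proof only needs $G-S$ planar with $\a S=k-1$. It gives $\delta(G)\le k+2$. Equivalently, we can invoke \cref{main2} directly.

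\textbf{Case $k=0$.} Here $G$ has a $\{1,2\}$-factor $F$, and every edge is essential. We claim $G=F$. Suppose some edge $e$ of $G$ is not in $F$. Then $F$ is still a $\{1,2\}$-factor of $G-e$, so $G-e$ is $0$-$\{1,2\}$-factor-critical, contradicting minimality. Hence $G=F$ is a disjoint union of copies of $K_2$ and cycles, so $\delta(G)\in\{1,2\}$, as required. This is exactly the argument sketched in the proof of \cref{main2}. Note that the weaker bound $k+3$ from the first item of \cref{main} is not needed here.

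The only delicate point is the case split at $k=0$. For $k=0$ the condition ``$\a S=k-1$'' in \cref{main2} is vacuous or meaningless, and \cref{main} by itself only gives $\delta(G)\le3$. The direct argument that a minimal $0$-critical graph is its own $\{1,2\}$-factor settles this case. Everything else is the routine heredity of planarity under vertex deletion.
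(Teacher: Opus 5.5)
Your proposal is correct and is essentially the paper's own derivation. The paper obtains \cref{main3} directly from \cref{main2} (equivalently, the second item of \cref{main}) because deleting vertices preserves planarity, and it settles the case $k=0$ exactly as in the proof of \cref{main2}, by noting that a minimal $0$-$\{1,2\}$-factor-critical graph is its own $\{1,2\}$-factor. Your remark that the proof of \cref{main} actually uses planarity of $G-S$ for $|S|=k-1$ is accurate, but it is moot here, since every vertex-deleted subgraph of a planar $G$ is planar.
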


If $k=1$ and $G=K_{4}$, we have $\delta(G)=k+2$, since
$G$ is a minimal $k$-$\{1,2\}$-factor-critical planar graph. If $k^{\P}=2$,
then $G$ is a minimal $k^{\P}$-$\{1,2\}$-factor-critical
planar graph, while $\delta(G)=k^{\P}+1$. That is, both bounds in
\cref{main3} can be attained. The examples obtained motivate the
following conjecture.

\begin{conjecture}\label{asdiouhn12}
	If $G$ is a minimal $t$-$\{1,2\}$-factor-critical
	graph for $t=k,k+1$, then $G$ is a complete graph.
\end{conjecture}

In \cite{KEVINksachsminimal1}, using the ear-pendant decomposition of
$2$-bicritical graphs due to Bourjolly and Pulleyblank
\cite{bourjolly1989konig}, the following result is obtained.

\begin{theorem}[\cite{KEVINksachsminimal1}]
	Let $G$ be a minimal $1$-$\{1,2\}$-factor-critical
	graph different from $K_{4}$. Then $\delta(G)=2$. 
\end{theorem}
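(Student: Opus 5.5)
The inequality $\delta(G)\ge 2$ is \cref{obserpokasdpokm1} with $k=1$, so the task is to show that a minimal $1$-$\{1,2\}$-factor-critical graph $G\neq K_{4}$ has a vertex of degree $2$. By \cref{apoksjasd}, $1$-$\{1,2\}$-factor-critical graphs are exactly the $2$-bicritical graphs. Minimality therefore means that $G$ is $2$-bicritical but $G-e$ is not, for every edge $e$. I would argue by contradiction: assume $\delta(G)\ge 3$ and deduce that $G=K_{4}$.

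The main tool would be the ear-pendant decomposition of $2$-bicritical graphs of Bourjolly and Pulleyblank \cite{bourjolly1989konig}. I am using it in the following form, and I am not certain this is its exact statement. Every $2$-bicritical graph is obtained from a basic $2$-bicritical graph (an odd cycle, or a small complete graph such as $K_{4}$) by a sequence of operations. Each operation adds either an \emph{ear}, that is, a path whose ends lie in the current graph and whose internal vertices are new, of a prescribed parity, or a \emph{pendant}, that is, a new vertex or small gadget joined to the current graph by a prescribed number of edges. Every intermediate graph in the sequence is again $2$-bicritical.

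Fix such a decomposition $G_{0},G_{1},\dots,G_{r}=G$ and look at the last operation, the one producing $G_{r}$ from $G_{r-1}$.
\begin{enumerate}
\item Suppose it is an ear with at least one internal vertex. An internal vertex has degree $2$ in $G$, contradicting $\delta(G)\ge 3$.
\item Suppose it is an ear consisting of a single edge $e$. Then $G-e=G_{r-1}$ is $2$-bicritical, contradicting minimality.
\item Suppose it is a pendant. The new vertex $w$ has degree equal to the number of attaching edges. If this number is at least $3$, I would show that one attaching edge $e$ can be removed while keeping $G-e$ $2$-bicritical, again contradicting minimality.
\end{enumerate}

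To show that an attaching edge of a pendant is removable, I would work with \cref{apoksjasd} directly. Suppose $G-e$ fails, where $e=wu$. Then, as in the proof of \cref{main}, there is a nonempty set $S$, independent in $G-e$, with $|S|\ge|N_{G-e}(S)|$, while $|S|<|N_{G}(S)|$. So $S$ contains $w$ or $u$ and is tight: $|N_{G}(S)|\in\{|S|+1,|S|+2\}$. Removing $w$ from $S$ produces a set that is nonempty and independent in $G_{r-1}$; nonemptiness can be arranged by using $|S|\ge 2$ or by choosing $e$ suitably. Since $w$ has at least $3$ neighbours in $G_{r-1}$, the inequality for $S-\{w\}$ should become at least as tight in $G_{r-1}$, which contradicts the $2$-bicriticality of $G_{r-1}$.

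Once every non-basic last operation is excluded, $r=0$ and $G=G_{0}$ is a basic graph. An odd cycle has $\delta=2$, and among the basic graphs with $\delta\ge 3$ only $K_{4}$ is also minimal. This gives the contradiction unless $G=K_{4}$.

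The main obstacle is the pendant case. It depends on the precise form of the Bourjolly--Pulleyblank operations: which pendants are allowed and with how many attaching edges. It also depends on a careful count of $N(S)$ when a tight set $S$ meets the pendant. I would handle this first by listing the allowed pendant types and checking, for each, that some attaching edge is redundant whenever all vertices have degree at least $3$. The same counting with tight sets from \cref{apoksjasd} would also rule out the case where the basic graph is larger than $K_{4}$.
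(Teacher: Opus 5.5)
Granting the form of decomposition you assume (in particular, that every $G_i$ is $2$-bicritical), your first two cases are fine. Inspecting the last step of a Bourjolly--Pulleyblank decomposition is also the route the paper attributes to the cited source.

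\textbf{The pendant case is a genuine gap.} This case carries the whole difficulty, and the counting you propose does not give a contradiction. Let $e=wu$, let $S\ni w,u$ be independent in $G-e$ with $|S|\ge |N_{G-e}(S)|$, and put $S'=S-\{w\}$. You only get $N_{G_{r-1}}(S')\subseteq N_{G-e}(S)$, hence $|N_{G_{r-1}}(S')|\le |S'|+1$. That is exactly what $2$-bicriticality allows, because the other neighbours of $w$ may already lie in $N(S')$.

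In fact the lemma you want is false: it is not true that whenever $G-w$ is $2$-bicritical and $\deg(w)\ge 3$, some edge at $w$ can be deleted with $G$ remaining $2$-bicritical. Two examples:
\begin{itemize}
\item Take $K_4$, obtained from a triangle by adding $w$. No edge of $K_4$ is removable.
\item Take the $5$-cycle $v_0v_1v_2v_3v_4$ and add $w$ adjacent to $v_0,v_1,v_2$. Then $G-w=C_5$ is $2$-bicritical. The sets $\{w,v_1\}$, $\{w,v_0,v_3\}$ and $\{w,v_2,v_4\}$ are independent in $G-wv_1$, $G-wv_0$ and $G-wv_2$ respectively, and each has $|N(S)|=|S|$. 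So no edge at $w$ is removable.
\end{itemize}
Your argument uses neither $\delta(G)\ge 3$ nor $G\neq K_4$, so as written it proves a false statement. Moreover, the only proper $2$-bicritical subgraphs of $K_4$ are triangles. So unless $K_4$ is a basic graph of the decomposition, it arises precisely from such a three-edge pendant. Placing $K_4$ among the basic graphs is an assumption you have not justified.

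\textbf{What the missing step requires.} The correct criterion is this: $wu$ is essential if and only if $G-w$ has a nonempty independent set $T$ with $|N(T)|=|T|+1$, $N(w)-\{u\}\subseteq N(T)$ and $u\notin N(T)$. You would then have to show that having such tight sets for every neighbour of $w$, together with $\delta(G)\ge 3$, forces $G=K_4$. Alternatively, you would have to find a removable edge elsewhere in $G$; an analysis confined to the attaching edges of the last step may not suffice.

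This is where $G\neq K_4$ must enter, and also connectivity. As literally stated, the theorem fails for the disjoint union $2K_4$:
\begin{itemize}
\item Deleting any vertex leaves $K_3\cup K_4$, which has a $\{1,2\}$-factor, so $2K_4$ is $1$-$\{1,2\}$-factor-critical.
\item Deleting an edge $e$ of one copy, and then a vertex of that copy not on $e$, leaves a $P_3$ component, so every edge is essential.
\item Yet $\delta(2K_4)=3$.
\end{itemize}
Your use of a single decomposition grown from one basic graph tacitly assumes $G$ is connected, and that hypothesis should be stated explicitly.
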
 

If $G$ is a minimal $2$-$\{1,2\}$-factor-critical graph,
then $\delta(G)\ge3$. Hence, if $G$ is also a minimal
$1$-$\{1,2\}$-factor-critical graph, we obtain that $G=K_{4}$.
This confirms \cref{asdiouhn12} for $k=1$.

\section*{Acknowledgments}

This work was partially supported by Universidad Nacional de San Luis, grants PROICO 03-0723 and PROIPRO 03-2923, MATH AmSud, grant 22-MATH-02, Consejo Nacional de Investigaciones
Cient\'ificas y T\'ecnicas grant PIP 11220220100068CO and Agencia I+D+I grants PICT 2020-00549 and PICT 2020-04064.

\section*{Declaration of generative AI and AI-assisted technologies in the writing process}
During the preparation of this work the authors used ChatGPT-3.5 in order to improve the grammar of several paragraphs of the text. After using this service, the authors reviewed and edited the content as needed and take full responsibility for the content of the publication.

\section*{Data availability}

Data sharing not applicable to this article as no datasets were generated or analyzed during the current study.

\section*{Declarations}

\noindent\textbf{Conflict of interest} \ The authors declare that they have no conflict of interest.

\bibliographystyle{apalike}

\bibliography{TAGcitasV2025}

\end{document}